\tikzstyle{dot}=[inner sep=1pt, fill, black, circle, draw, minimum size = 5pt]
\tikzstyle{highlight}=[inner sep=1pt, draw=black, fill=white, circle, minimum size = 9pt]
\setlist[itemize]{noitemsep}
\setlist[enumerate]{noitemsep}
\setlist[enumerate,1]{label = (\roman*)}
\tikzstyle{dot}=[inner sep=1pt, fill, black, circle, draw, minimum size = 5pt]
\tikzstyle{highlight}=[inner sep=1pt, draw=black, fill=white, circle, minimum size = 9pt]
\newcommand{\up}{\mathsf{Up}}
\newcommand{\cC}{\mathcal{C}}
\newcommand{\cL}{\mathcal{L}}
\newcommand{\fE}{\mathfrak{E}}
\newcommand\vari{\mathcal{V}}
\newcommand{\eqrel}{ {\sim} }
\newcommand{\neqrel}{ {\nsim} }
\newcommand{\rank}{\mathsf{rank}}
\newcommand{\theory}{\mathsf{Eq}}
\newcommand{\espace}[1]{\mathfrak{#1}}
\newcommand{\CU}{\mathcal{CU}}
\newtheorem{theorem}{Theorem}
\newtheorem{proposition}[theorem]{Proposition}
\newtheorem{definition}[theorem]{Definition}
\newtheorem{lemma}[theorem]{Lemma}
\newtheorem{corollary}[theorem]{Corollary}
\theoremstyle{definition}
\newtheorem*{notation}{Notation}
\newtheorem*{remark}{Remark}
\title{Varieties of Strictly  n-Generated Heyting Algebras}
\date{\today}
\author{Tapani Hyttinen}
\author{Davide Emilio Quadrellaro}
\email{tapani.hyttinen@helsinki.fi}
\email{davide.quadrellaro@helsinki.fi}
\address{Department of Mathematics and Statistics, University of Helsinki, P.O. Box 68 (Pietari Kalmin katu 5), 00014 Helsinki, Finland}
\thanks{D. E. Quadrellaro was supported by grant 336283 of the Academy of Finland and Research Funds of the University of Helsinki.}
\subjclass[2020]{03C05, 06D20, 08B99.}
\begin{document}
	\maketitle
		\begin{abstract}
	   For any $n<\omega$ we construct an infinite Heyting algebra $H_n$ which is $(n+1)$-generated but that contains only finite $n$-generated subalgebras. From this we conclude that for every $n<\omega$ there exists a variety of Heyting algebras which contains an infinite $(n+1)$-generated Heyting algebra, but which contains only finite $n$-generated Heyting algebras. For the case $n=2$ this provides a negative answer to a question posed by G. Bezhanishvili and R. Grigolia in \cite{bezhanishvili2005locally}.
	\end{abstract}

\section{Introduction}

A Heyting algebra $(H,\land,\lor,\to,0,1)$ is a bounded distributive lattice with an implication symbol $\to$ defined as the right adjoint of the meet, i.e. for $a,b,c\in H$
\[ a\land b \leq c \Longleftrightarrow a \leq b\to c.   \]
Heyting algebras appear naturally in many areas of mathematics. For example, the underlying lattice of open sets of a topological space forms a Heyting algebra under the subset ordering. In a topos, the partial order of the subobjects of a given object forms a Heyting algebra. Also, in algebraic logic, Heyting algebras provide a sound and complete semantics to intermediate logics, i.e. to propositional systems extending intuitionistic logic and contained in classical propositional logic.

In the present article we shall be especially interested in questions concerning infinite, finitely generated Heyting algebras. Some characterisations of finitely generated Heyting algebras were  provided in \cite{esakia1977criterion} and later in  \cite{bezhanishvili2006lattices}. \textcite{bezhanishvili2005locally} further studied locally finite varieties of Heyting algebras and they raised a question which prompted us to consider the subject of the present article, i.e. they asked whether a variety of Heyting algebras $\mathcal{V}$ is locally finite if and only if the free 2-generated $\mathcal{V}$-algebra is finite \cite[Prob. 2.4]{bezhanishvili2005locally}. Such question was studied in \cite{benjamins2020locally}, where an affirmative answer was given for varieties generated by Heyting algebras of width 2.

We show in this article that for every $n<\omega$ there exists an infinite Heyting algebra $H_n$ which is $(n+1)$-generated  but that contains only finite $n$-generated subalgebras. In turn, we prove that for every $n<\omega$ there exists a variety of Heyting algebras $\vari_n$ which contains only finite $n$-generated Heyting algebras, but which also contains some infinite $(n+1)$-generated Heyting algebra. For $n=2$ this provides a negative answer to Bezhanishvili and Grigolia's question.

We review shortly the Esakia duality \cite{esakia} between Heyting algebras and Esakia spaces, as we shall often use it in this article. We start by fixing some notation: whenever $(X,\leq)$ is a partial order and $Y\subseteq X$ we let
\begin{equation*}
Y^\uparrow = \{ x\in X  \mid  \exists y \in Y \text{ and } y \leq x \},
\qquad
Y^\downarrow = \{ x\in X  \mid  \exists y \in Y \text{ and } y \geq x  \}.
\end{equation*}
For $x\in X$, we write $x^\uparrow$ and $x^\downarrow$ for the sets $\{x\}^\uparrow$ and $\{x\}^\downarrow$ respectively. Also, given any subset $Y\subseteq X$ we write $Y^c$ for its complement $X\setminus Y$. We write $\up(X)$ for the set of upsets over a poset $X$.

We recall that an \emph{Esakia space} is a structure $\fE=(X,\tau,\leq)$, where $(X,\tau)$ is a topological space, $(X,\leq)$ a partial order and, additionally:
\begin{itemize}
	\item[(i)]  $(X,\tau)$ is compact;
	\item [(ii)] For all $x,y\in X$ such that $x\nleq y$, there is a clopen upset $U$ such that $x\in U$ and $y\notin U$;
	\item[(iii)] If $U$ is a clopen set, then also $U^\downarrow$ is clopen.
\end{itemize}
Morphisms in the category of Esakia spaces take into account both the topological and the order-theoretic structure. Given Esakia spaces $\espace{E} = (X, \tau, \leq)$ and $\espace{E'} = (X',\tau',\leq')$, a \emph{p-morphism} $f: \espace{E} \to \espace{E'}$ is a continuous map satisfying the two following conditions:
\begin{enumerate}
	\item For all $x,y \in \espace{E}$, if $x \leq y$ then $f(x) \leq' f(y)$;
	\item For all $x\in \espace{E}$ and $y' \in \espace{E'}$ such that $f(x) \leq' y'$, there exists $y \in \espace{E}$ such that $x \leq y$ and $f(y) = y'$.
\end{enumerate}

Esakia duality provides a dual categorical equivalence \emph{à la Stone} between the category of Heyting algebras with homomorphisms and the category of Esakia spaces with p-morphisms. We review the two functors of this duality. On the one hand,  a Heyting algebra is associated to its \textit{spectrum} of prime filters $\fE_H$. A \emph{prime filter} $F\subseteq H$ is a proper filter for which $x\lor y\in F$ entails $x\in F$ or $y\in F$. The set $\fE_H$ is then endowed with the topology induced by the subbasis
\[ \{ \phi(a) \,|\, a\in H \} \cup \{ \phi(a)^c \,| \, a\in H \} \]
where  $\phi(a) = \{ F\in \fE_H \,|\, a\in F \}$. A homomorphism $h:H\to H'$ is then associated to its dual p-morphism $h_*:\fE_{H'}\to \fE_H$ defined by $h_*(F)=h^{-1}[F]$.   On the other hand, an Esakia space $\fE$ is associated to the Heyting algebra of clopen upsets $\mathcal{CU}(\fE)$,  where the  Boolean operations are interpreted in the obvious way as intersection and union, and implication is defined by $U\to V:=((U\setminus V)^\downarrow)^c$. Similarly, a p-morphism $p:\fE\to \fE'$ is associated to its dual homomorphism $p^*:\mathcal{CU}(\fE')\to \mathcal{CU}(\fE)$ defined by $p^*(U)=p^{-1}[U]$. We refer the reader to \cite{esakia} for details on such constructions. In the present article we will generally identify elements of a Heyting algebra with clopen upsets of its dual Esakia space and we will generally think of elements of an Esakia space as prime filters over the dual Heyting algebra.

\section{Poset Colourability}

\begin{definition} Let $P$ be an arbitrary poset and $G\subseteq \up(P)$.
	\begin{enumerate}
		\item We define by recursion:
		\begin{align*}
		x \eqrel^G_0 y \; &\;\Longleftrightarrow \; \forall g\in G \; (x\in g \iff y\in g)  \\
		x\eqrel^G_{n+1} y \; &\;\Longleftrightarrow \; \forall z\geq x \; \exists k\geq y \; (z\eqrel^G_{n} k) \; \land \; \forall k\geq y \; \exists z\geq x \; (z\eqrel^G_{n} k)
		\end{align*}
		and we let $ \eqrel^G_{\omega}  =\bigcap_{n\in\omega} \eqrel^G_{n}  $.		
		
		\item For any $x\in P$, the \emph{n-type of $x$ over $G$} is the set $t_n(x/G)$ of all points $y\in P$ such that $x\eqrel^G_n y$. The \emph{$\omega$-type of $x$ over $G$} is the set $t(x/G)$ of all points $y\in P$ such that $x\eqrel^G_\omega y$. 
		
		\item For $X\subseteq P$ we denote by $X/\eqrel^G_n$ the set of $n$-types $\{ t_n(x/G) \mid x\in X \}$ and by $X/\eqrel^G_\omega$ the set of  $\omega$-types $\{ t(x/G) \mid x\in X \}$.
		
		\item We say that a point $x\in P$ is \emph{$ G $-isolated} if $x\eqrel^G_{\omega}y$ entails $x=y$, namely if $t(x/G)=\{x\}$.
				
		\item We say that $P$ is \emph{$G$-coloured}, or \emph{coloured by $G$}, if every $x\in P$ is $G$-isolated.
	\end{enumerate}	
\end{definition}

\begin{lemma}\label{refinement}
	For any $G\subseteq \up(P)$, $k<\omega$, the equivalence relation $\eqrel^G_{k+1}$ refines $\eqrel^G_{k}$.
\end{lemma}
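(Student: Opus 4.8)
The plan is to argue by induction on $k$, showing that $x \eqrel^G_{k+1} y$ implies $x \eqrel^G_k y$; this containment of relations is exactly the assertion that $\eqrel^G_{k+1}$ refines $\eqrel^G_k$. The inductive step is essentially formal, since the definition of $\eqrel^G_{n+1}$ is a back-and-forth condition depending on $\eqrel^G_n$ only monotonically. Concretely, assuming $\eqrel^G_{k+1}\subseteq\eqrel^G_k$, suppose $x\eqrel^G_{k+2}y$; then for every $z\geq x$ there is $w\geq y$ with $z\eqrel^G_{k+1}w$, and for every $w\geq y$ there is $z\geq x$ with $z\eqrel^G_{k+1}w$. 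Replacing each witness $z\eqrel^G_{k+1}w$ by the weaker $z\eqrel^G_k w$, which is legitimate by the inductive hypothesis, turns these two clauses into precisely the defining condition of $x\eqrel^G_{k+1}y$. Hence $\eqrel^G_{k+2}\subseteq\eqrel^G_{k+1}$, and the whole content of the lemma sits in the base case.

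The hard --- or at least the only non-formal --- part will be the base case $\eqrel^G_1\subseteq\eqrel^G_0$, and here I expect the crucial point to be that the members of $G$ are \emph{upsets}, a hypothesis that the monotone inductive step never touches. Suppose $x\eqrel^G_1 y$. I would extract two witnesses from the definition by plugging in the trivial comparabilities $x\geq x$ and $y\geq y$: applying the first (forth) clause to $z=x$ yields some $w\geq y$ with $x\eqrel^G_0 w$, and applying the second (back) clause to $w=y$ yields some $z\geq x$ with $z\eqrel^G_0 y$.

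With these two witnesses in hand I would verify $x\eqrel^G_0 y$ directly, i.e. that $x$ and $y$ belong to exactly the same elements of $G$. Fix $g\in G$. If $y\in g$, then $w\in g$ since $g$ is an upset and $w\geq y$, whence $x\in g$ because $x\eqrel^G_0 w$; symmetrically, if $x\in g$, then $z\in g$ since $g$ is an upset and $z\geq x$, whence $y\in g$ because $z\eqrel^G_0 y$. As $g\in G$ was arbitrary, this gives $x\eqrel^G_0 y$, completing the base case and hence the induction. The essential subtlety is thus that the back-and-forth definition at level $1$ only becomes a genuine statement about $x$ and $y$ themselves once one uses upward closure of the colours together with the trivial witnesses $z=x$ and $w=y$.

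I note in passing that the lemma tacitly treats each $\eqrel^G_n$ as an equivalence relation; reflexivity, symmetry and transitivity follow by a routine parallel induction on $n$, with symmetry and reflexivity immediate from the symmetric shape of the definition and transitivity obtained by composing witnesses. I would record this separately rather than fold it into the refinement argument, since it plays no role in the containment itself.
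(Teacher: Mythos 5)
Your proof is correct and follows essentially the same route as the paper: induction on $k$, with a purely formal monotone inductive step and a base case that hinges on the members of $G$ being upsets. The only difference is presentational --- the paper argues contrapositively (from $x\neqrel^G_0 y$ it refutes the back clause of $\eqrel^G_1$ at the single witness $k=y$), whereas you argue directly, extracting the trivial witnesses $z=x$ and $w=y$ from the two clauses; these are dual formulations of the same argument.
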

\begin{proof}
	We proceed by induction. For $k=0$ we assume  $x \neqrel^G_0 y$, then there is without loss of generality some $g_i\in G$ such that $x\in g_i$, $y\notin g_i$. Since $g_i$ is an upset it follows that $z\in g_i$ for every $z\geq x$ but $y\notin g_i$, showing  $x \neqrel^G_1 y$.
	
	For $k=m+1$ assume  $x \neqrel^G_{k} y$, then there is without loss of generality some $z\geq x$ such that for all $w\geq y$, $z\neqrel^G_{k-1} w$. Hence, by induction hypothesis, $z\neqrel^G_{k}w$ and therefore $x \neqrel^G_{k+1} y$.
\end{proof}

\begin{notation}
	We are especially interested in the case when the poset $P$ is an Esakia space. We denote by $H,K,\dots$ Heyting algebras and by $\fE_H,\fE_K,\dots$ their Esakia duals. Without loss of generality, we think of Heyting algebras as consisting of clopen upsets over their Esakia duals and we think of Esakia spaces as consisting of prime filters over their dual Heyting algebras. Also, if $\phi(x_0,\dots,x_{k-1})$ is a first-order term in the language of Heyting algebras and $G\subseteq H$, then $\phi(g_0,\dots,g_{k-1})$ is an element of $H$ and can also be viewed as a clopen upset of $\fE_H$. Finally, we remark that a subalgebra of a Heyting algebra $H$ is simply a substructure of $H$ in the language of Heyting algebras $\cL=\{\land,\lor,\to,0,1  \}$. For any $G\subseteq H$ we write $\langle G \rangle $ for the smallest subalgebra of $H$ containing $G$.
\end{notation}

\begin{definition}
	Let $\fE_H$ be an Esakia space and $G=\{g_i \mid i< k\}\subseteq H$ be a finite set of clopen upsets.
	\begin{enumerate}
		\item Let $\phi:=\phi(x_0,\dots,x_{k-1})$ be a first-order term in the language of Heyting algebras. The \emph{implication rank} of $\phi$ is defined as follows:
		\begin{enumerate}
			\item If $\phi(x_0,\dots,x_{k-1})=x_i$ for some $i<k$, $\phi(x_0,\dots,x_{k-1})=0$ or $\phi(x_0,\dots,x_{k-1})=1$ then   $\rank(\phi)=0$;
			\item $\rank(\psi\land \chi)=\text{max}\{\rank(\psi),\rank(\chi)  \}$;
			
			\item $\rank(\psi\lor \chi)=\text{max}\{\rank(\psi),\rank(\chi)  \}$;
			
			\item $\rank(\psi\to \chi)=\text{max}\{\rank(\psi),\rank(\chi)  \}+1$.
		\end{enumerate}
		
		\item The \emph{implication rank of a clopen upset} $ U\in \langle G \rangle$ is the least implication rank of a term $\phi$ such that $\phi(g_0,\dots,g_{k-1})=U$. For any clopen upset $U$ we write $\phi_U(\vec{x})$ for this term of minimal rank and $\phi_U(\vec{g})$ for its interpretation.
	\end{enumerate}
\end{definition}

The following lemma and the subsequent theorem  generalize \cite[Prop. 34]{grilletti2023esakia} and they are essentially a reformulation of \cite[Thm. 3.1.5]{bezhanishvili2006lattices}. The relation between the implication rank of intuitionistic propositional formulas and the existence of a back-and-forth system of corresponding length was established in \cite[Thms. 4.7-4.8]{visser1996uniform}.

\begin{lemma}\label{lemmaquotient}  Let  $H$ be a Heyting algebra, let $\fE_H$ be its dual Esakia space and let $x,y\in \fE_H$. The following condition holds for any finite $G\subseteq H$:		
	\begin{align*}
	x \eqrel^G_{n} y & \Longleftrightarrow  \forall U \in \langle G \rangle \text{ with } \rank(U)\leq n: \; (x \in U \iff y \in U).
	\end{align*}
\end{lemma}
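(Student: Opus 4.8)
The plan is to argue by induction on $n$, establishing both directions of the biconditional at once. Throughout I use the Esakia reading of implication: for clopen upsets $U,V$ and a point $w$, one has $w\in U\to V$ if and only if every $z\geq w$ with $z\in U$ also satisfies $z\in V$, which is immediate from $U\to V=((U\setminus V)^\downarrow)^c$. The base case $n=0$ is essentially definitional: the upsets of implication rank $0$ are exactly the members of the bounded sublattice generated by $G$, so agreement of $x$ and $y$ on all of them propagates from agreement on the generators through $\cap$ and $\cup$, while conversely each generator is itself of rank $0$.

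For the forward direction, assume $x\eqrel^G_{n+1}y$. By \cref{refinement} we have $x\eqrel^G_{n}y$, so the induction hypothesis yields agreement of $x$ and $y$ on all upsets of rank $\leq n$, in particular on generators and constants. A structural normal form now reduces the work: peeling off the maximal $\land,\lor$-part at the top of the minimal term $\phi_U$ shows that every $U\in\langle G\rangle$ of rank $\leq n+1$ is a $\{\land,\lor\}$-combination of subterms that are either generators and constants or of the shape $\psi\to\chi$ with $\rank(\psi),\rank(\chi)\leq n$. Since $\cap$ and $\cup$ preserve agreement, it therefore suffices to treat $W=U\to V$ with $\rank(U),\rank(V)\leq n$. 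If $x\in W$ but $y\notin W$, then some $k\geq y$ has $k\in U$ and $k\notin V$; the back-and-forth clause of $\eqrel^G_{n+1}$ provides $z\geq x$ with $z\eqrel^G_{n}k$, and the induction hypothesis forces $z\in U$ and $z\notin V$, contradicting $x\in W$. The symmetric clause rules out the opposite failure, so $x$ and $y$ agree on every rank-$\leq(n+1)$ upset.

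The backward direction, argued contrapositively, is where the main obstacle lies, and the key auxiliary fact is that for finite $G$ the set of $n$-types $P/\eqrel^G_{n}$ is finite. This follows by a side induction: there are at most $2^{|G|}$ many $0$-types, and since $x\eqrel^G_{n+1}y$ holds exactly when $\{t_n(z/G):z\geq x\}=\{t_n(k/G):k\geq y\}$, each $(n+1)$-type is determined by a subset of the finite set of $n$-types. Assume now $x\neqrel^G_{n+1}y$; without loss of generality there is $z\geq x$ whose $n$-type $t=t_n(z/G)$ is realised by no point above $y$. By the induction hypothesis distinct $n$-types are separated by rank-$\leq n$ upsets, so finiteness lets me fix finitely many rank-$\leq n$ upsets $U_1,\dots,U_r$ whose membership pattern determines the $n$-type. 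Setting $P=\bigcap\{U_i:z\in U_i\}$ and $Q=\bigcup\{U_j:z\notin U_j\}$, both of rank $\leq n$, a point realises $t$ precisely when it lies in $P\setminus Q$; hence the condition that no successor realises $t$ is exactly membership in $P\to Q$, an upset of rank $\leq n+1$. As $z\geq x$ realises $t$ we get $x\notin P\to Q$, whereas $y\in P\to Q$ since no point above $y$ realises $t$, so $P\to Q$ separates $x$ from $y$ and completes the contrapositive. The delicate point is exactly this last step: passing from the possibly infinitely many upsets separating $z$ from the individual successors of $y$ to a single separating upset of rank $\leq n+1$, a uniformisation made possible by the finiteness of the type space together with the encoding of $t$ as a difference $P\setminus Q$ of two low-rank upsets, which keeps the defining condition expressible by one implication without leaving the class of upsets.
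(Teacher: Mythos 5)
Your proof is correct and takes essentially the same route as the paper's: induction on $n$, with the forward direction handled by structural induction on the minimal term (using \cref{refinement} and the back-and-forth clause for the implication case), and the backward direction by assembling a single separating implication $P \to Q$ of rank $\leq n+1$ from finitely many rank-$\leq n$ separators, exactly the shape of the paper's upset $Z = \bigcap_{k\in I_0}\psi_k \to \bigcup_{k\in I_1}\chi_k$. The only difference is organizational: where the paper chooses a separator for each $k \geq y$ and then prunes the resulting families to finite subfamilies, you pre-uniformise by first proving the $n$-type space finite via a side induction --- a point the paper compresses into the assertion that there are only finitely many upsets of rank $\leq m$ --- so your write-up in fact makes that terse step explicit.
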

\begin{proof} We fix an enumeration $G=\{g_i \mid i< k\}$ and we proceed as follows.
	
	 $(\Rightarrow)$	We start by showing the first direction by induction on $n$.
	\begin{itemize}
		\item  If $x \eqrel^G_{0} y$ and $\rank(U)=0$, then $\phi_U$ is a Boolean combination of atomic terms. If $\phi_U=0$ or $\phi_U=1$ then $U=\emptyset$ or $U=\fE_H$ and the claim follows immediately.  Now, if $\phi_U=g_i$ for some $i<k$, we have by definition that $x\in g_i$ if and only if $y\in g_i$. If $\phi_U= \alpha\land\beta$ or  $\phi_U=\alpha\lor \beta$, then the claim follows immediately by the induction hypothesis. 
		
		\item If $x \eqrel^G_{m+1} y$ and $\rank(U)\leq m+1$,  then we proceed by induction on the complexity of $\phi_U$. If $\phi_U$ is atomic or a Boolean combination of atomic terms, then  $\rank(\phi_U)=0$. By \cref{refinement} we have that $x \eqrel^G_{0} y$, hence the claim follows by the previous item.
		
		If $\phi_U=\alpha\to \beta$, then $\phi_U= ((\alpha\setminus \beta)^{\downarrow})^c$ for some $\alpha,\beta$ with $\rank(\alpha)\leq m$, $\rank(\beta)\leq m$. Suppose $x\notin ((\alpha\setminus \beta)^{\downarrow})^c$, then there is some $z\geq x$ such that $z\in \alpha\setminus \beta$ and, since $x \eqrel^G_{m+1} y$, there is some $k\geq y $ such that $z \eqrel^G_{m} k$. By induction hypothesis  $k\in \alpha\setminus \beta$, showing  $y\notin ((\alpha\setminus \beta)^{\downarrow})^c$. The converse direction is analogous.
	\end{itemize}

	\noindent $(\Leftarrow)$ We prove the converse direction by induction on $n$. \begin{itemize}
		\item Let $n=0$ and suppose without loss of generality that there is some $g_i\in G$ such that $x\in g_i$, $y\notin g_i$. Since $\rank(g_i)=0$ the claim follows immediately.
		
		\item Let $n=m+1$. If $x\neqrel^G_{m+1} y $  then  (without loss of generality) there is $z\geq x$ such that for all $k\geq y$ we have $z\neqrel^G_{m} k$. By induction hypothesis, for every $k\geq y$, there is either a clopen upset $\psi_k\in \langle G \rangle$ such that $z\in \psi_k$ and $k\notin \psi_k$, or a clopen upset $\chi_k\in \langle G \rangle$ such that $z\notin \chi_k$ and $k\in \chi_k$, with $\rank(\psi_k),\rank(\chi_k)\leq m$. We let
		\begin{align*}
			I_0&:=\{k\geq y \;|\;z\in \psi_k, k\notin \psi_k    \} \\
			I_1&:=\{k\geq y \;|\;z\notin \chi_k, k\in \chi_k    \}
		\end{align*}
		and then we define the upset $Z:= \bigcap_{k\in I_0} \psi_k \to \bigcup_{k\in I_1} \chi_k$, namely $Z:=(( \bigcap_{k\in I_0} \psi_k \setminus  \bigcup_{k\in I_1} \chi_k )^\downarrow)^c $.
		
		Now, since for every $k\in I_0$ we have $z\in \psi_k$ and for every $k\in I_1$ we have $z\notin \chi_k$, it follows that $z\in \bigcap_{k\in I_0} \psi_k \setminus  \bigcup_{k\in I_1} \chi_k$ whence $x\notin (( \bigcap_{k\in I_0} \psi_k \setminus  \bigcup_{k\in I_1} \chi_k )^\downarrow)^c  $.
		
		If $y\notin (( \bigcap_{k\in I_0} \psi_k \setminus  \bigcup_{k\in I_1} \chi_k )^\downarrow)^c$ then  for some $k\geq y$ we have $k\in \bigcap_{k\in I_0} \psi_k \setminus \bigcup_{k\in I_1} \chi_k$. Now, if $k\in I_0$ then $k\notin \psi_k$, whence  $ k\notin \bigcap_{k\in I_0} \psi_k \setminus \bigcup_{k\in I_1} \chi_k$. Otherwise, if $k\in I_1$ then $k\in \chi_k$, whence  $ k\notin \bigcap_{k\in I_0} \psi_k \setminus \bigcup_{k\in I_1} \chi_k$. Since $I_0\cup I_1=y^\uparrow$ this is a contradiction. This shows that the upset $Z$ separates $x$ and $y$.
		
		It remains to show that $Z$ is a clopen upset of rank $\leq m+1$. In fact, since $I_0$ and $I_1$ are not necessarily finite, it does not follow immediately that $Z$ is defined by a term over $G$. However, it follows by the induction hypothesis that the number of upsets $V$ of $\rank(V)\leq m$ is finite, hence the sets $\{\psi_k \mid k\in I_0  \}$ and $\{\chi_k \mid k\in I_1  \}$ are both finite. Thus there are finite subsets $I_0'\subseteq I_0$ and $I_1'\subseteq I_1$ such that $\rho:=(( \bigcap_{k\in I'_0} \psi_k \setminus  \bigcup_{k\in I'_1} \chi_k )^\downarrow)^c =(( \bigcap_{k\in I_0} \psi_k \setminus  \bigcup_{k\in I_1} \chi_k )^\downarrow)^c$. Thus $\rho$ is clearly a term and $\rank(\rho)\leq m+1$, completing our proof.
		\qedhere
	\end{itemize}		  
\end{proof}

\begin{remark}
	By \cref{lemmaquotient} two points $x,y\in \fE$ have the same type $t_n(x/G)=t_n(y/G)$ whenever they belong to the same clopen upsets $U\in \langle G\rangle $ of rank $\leq n$. This provides us with an alternative way to think about a type $t(x/G)$, which can be also identified with the collection of clopen upsets $U\in \langle G\rangle $ of rank $\leq n$ for which $x\in U$. The same considerations apply to $\omega$-types.
\end{remark}

\begin{theorem}\label{duality-result}
	Let $H$ be a Heyting algebra and let $G\subseteq H$ be finite, then $H=\langle G \rangle$ if and only if $\fE_H$ is $G$-coloured.
\end{theorem}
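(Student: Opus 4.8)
The plan is to read off both implications from the $\omega$-level version of \cref{lemmaquotient}: intersecting over all $n$, the relation $x \eqrel^G_\omega y$ holds if and only if $x$ and $y$ belong to exactly the same clopen upsets of $\langle G\rangle$ (this is the content of the preceding Remark). Thus $G$-colouring is precisely the statement that the clopen upsets in $\langle G\rangle$ separate the points of $\fE_H$, and the whole theorem becomes a separation statement, which I would prove by combining this observation with the Esakia axioms and a compactness argument.

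For the direction $H = \langle G\rangle \Rightarrow \fE_H$ is $G$-coloured I would take two distinct points $x \ne y$. By antisymmetry of $\leq$ we may assume $x \nleq y$, and condition (ii) in the definition of an Esakia space yields a clopen upset $U$ with $x \in U$ and $y \notin U$. Since $U \in H = \langle G\rangle$, the reformulation above gives $x \neqrel^G_\omega y$. As $x,y$ were arbitrary distinct points, every point is $G$-isolated, i.e. $\fE_H$ is $G$-coloured.

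The converse is the substantial direction. Assume $\fE_H$ is $G$-coloured; I must show that every clopen upset $U$ of $\fE_H$ lies in $\langle G\rangle$. The key lemma to isolate is a \emph{directed} separation statement: if $x \nleq y$ then there is an upset $V \in \langle G\rangle$ with $x \in V$ and $y \notin V$. Note that colouring only gives separation in \emph{some} direction, so the real work is upgrading this to the correct direction. I would argue by contradiction: if every $\langle G\rangle$-upset containing $x$ also contained $y$, I would use compactness of $\fE_H$ (the sets $x^\uparrow$, $V$, $V^c$ being closed) to produce a point $x' \ge x$ lying in exactly the same $\langle G\rangle$-upsets as $y$; colouring then forces $x' = y$, giving $x \le y$, a contradiction. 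The finite-intersection step here is where the Heyting implication does the work: given finitely many $\langle G\rangle$-upsets $V_1,\dots,V_p \ni y$ and $W_1,\dots,W_q \not\ni y$, set $V = \bigcap_i V_i$ and $W = \bigcup_j W_j$; since $y \in V\setminus W$ we get $y \notin V \to W = ((V\setminus W)^\downarrow)^c$, and the assumption on $x$ forces $x \notin V\to W$ as well, i.e. $x \in (V\setminus W)^\downarrow$, which produces a point $z \ge x$ in $V\setminus W$. This realizes the back condition of a p-morphism by hand and is the main obstacle of the proof.

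Once directed separation is in place, I would finish with a standard double compactness argument. Fixing a clopen upset $U$ and a point $x \in U$: for each $y \in U^c$ we have $x \nleq y$ (as $U$ is an upset), so directed separation gives $V_{x,y} \in \langle G\rangle$ with $x \in V_{x,y}$ and $y \notin V_{x,y}$; the clopens $V_{x,y}^c$ cover the compact set $U^c$, and a finite subcover yields $V_x \in \langle G\rangle$ with $x \in V_x \subseteq U$. Letting $x$ range over the compact set $U$ and extracting a finite subcover of $\{V_x\}$ expresses $U$ as a finite union of members of $\langle G\rangle$, so $U \in \langle G\rangle$ and hence $H = \langle G\rangle$. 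Alternatively, one can phrase the whole converse dually: the inclusion $\langle G\rangle \hookrightarrow H$ dualizes to a surjective p-morphism $q \colon \fE_H \to \fE_{\langle G\rangle}$ with $q(x) = x \cap \langle G\rangle$, which by the reformulation is injective exactly when $\fE_H$ is $G$-coloured, and a bijective Esakia p-morphism is an isomorphism, forcing $\langle G\rangle = H$.
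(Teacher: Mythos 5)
Your proposal is correct, and while the forward direction coincides with the paper's (Esakia separation plus \cref{lemmaquotient}), your converse takes a genuinely different route. The paper argues dually and by contraposition: it dualizes the inclusion $\langle G\rangle \hookrightarrow H$ to the surjective p-morphism $p\colon \fE_H \twoheadrightarrow \fE_{\langle G\rangle}$, $p(x)=x\cap\langle G\rangle$, checks via $x\in U \Leftrightarrow p(x)\in U$ for $U\in\langle G\rangle$ that $p$ preserves and reflects $\eqrel^G_\omega$, and concludes that $\langle G\rangle\neq H$ forces $p$ to identify two distinct points, which are then $\eqrel^G_\omega$-equivalent, contradicting colouring --- this is precisely the alternative you sketch in your final sentence. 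Your primary argument instead stays inside $\fE_H$: you first upgrade the undirected separation given by colouring to directed separation ($x\nleq y$ yields $V\in\langle G\rangle$ with $x\in V$, $y\notin V$), where the finite-intersection-property step via $y\notin V\to W = ((V\setminus W)^\downarrow)^c$ and hence $x\notin V\to W$ is correct and is indeed where the Heyting implication carries the load (the same trick the paper uses inside the proof of \cref{lemmaquotient}); you then recover an arbitrary clopen upset $U$ as a finite union of finite intersections of $\langle G\rangle$-sets by a double compactness argument. Your route is self-contained modulo the standard fact that $x^\uparrow$ is closed in an Esakia space (it is the intersection of the clopen upsets containing $x$ --- worth stating explicitly, since your compactness step needs it), and it makes the roles of compactness and of $\to$ visible; the paper's route is shorter because it delegates exactly those points to Esakia duality (the dual of an injective homomorphism is surjective, and a bijective p-morphism between Esakia spaces is an isomorphism). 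Both hinge on the same $\omega$-level reading of \cref{lemmaquotient}, so the two proofs are honest variants of one another, with yours trading categorical machinery for explicit topology.
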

\begin{proof}
	$(\Rightarrow)$ Given $x\nleq y $ in $\fE_H$, there is a clopen upset $U\in \mathcal{CU}(\fE)$ such that $x\in U$ and $y\notin U$. By $H=\langle G \rangle$, it follows that $U=\psi(\vec{g})$ for some term $\psi$. But then it follows immediately by \cref{lemmaquotient} that $x\neqrel^G_\omega y$.	
	
	$(\Leftarrow)$	Let $\fE_G$ be the dual of $\langle G \rangle$ and suppose $\fE_G\neq \fE_H$. Since the points of $\fE_H$ can be identified with prime filters over $H$ and the points of $\fE_G$ with prime filters over $\langle G \rangle$,  the map $p:\fE_H \twoheadrightarrow \fE_G$ obtained by letting $p:x\mapsto x\cap \langle G \rangle$  is well-defined. Also, since $p$ is exactly the Esakia dual of the inclusion map $i:\langle G\rangle \to H$, it follows immediately by Esakia duality that $p$ is a surjective p-morphism.
	
	Moreover, since $G\subseteq \langle G \rangle$, we can think of elements of $G$ also as clopen upsets over the dual Esakia space $\fE_G$, and in particular the equivalence relation $\eqrel^G_\omega$  is well-defined over  $\fE_G$. We claim that for any clopen upset $U\in \langle G\rangle$, we have that $x\in U$ if and only if $p(x)\in U$. Here notice that we think of $U$ both as an element of $\langle G \rangle$ and as a clopen upset over both $\fE_G$ and $\fE_H$. Similarly, we think of $x$ both as an element of $\fE_H$ and as a  prime filter over $H$, and of  $p(x)$ both as an element of $\fE_G$ and as a  prime filter over $\langle G \rangle$. By Esakia duality we have the following:
		\begin{align*}
			x\in U & \Longleftrightarrow U\in x \Longleftrightarrow U\in x \cap \langle G\rangle \Longleftrightarrow U\in p(x)  \Longleftrightarrow p(x)\in U. 
		\end{align*}	
		It follows by \cref{lemmaquotient} that $p$ preserves and reflects $\eqrel^G_\omega$, i.e. that $x\eqrel^G_\omega y$ if and only if $p(x)\eqrel^{G}_\omega p(y)$. By our assumption that $\fE_G\neq\fE_H$ we then have for two different $x,y\in \fE_H$ that  $p(x)=p(y)$. Hence clearly $p(x)\eqrel^G_{\omega} p(y)$ and, by the fact that $p$ reflects $\eqrel^G_\omega$ we obtain $x \eqrel^G_{\omega} y$, which together with $x\neq y$ contradicts the assumption that $\fE_H$ is $G$-coloured.	
\end{proof}	
	
\section{Strictly $n$-Colourable Esakia Spaces}

\begin{remark}
	We stress that in this article we identify natural numbers with finite ordinals, i.e. we identify each natural number $n<\omega$ with the set $\{ x\in \omega \mid x<n\}$. 
\end{remark}

\begin{definition} Let $\fE$ be an infinite Esakia space.
	\begin{enumerate}
	\item We say that $\fE$ is \emph{$k$-colourable} if there is a function $c:k\to \CU(\fE)$ such that every $x\in \fE$ is $c[k]$-isolated.
	\item We say that $\fE$ is \emph{strongly not $k$-colourable} if for any function $c:k\to \CU(\fE)$ the number of $\eqrel^{c[k]}_{\omega}$-equivalence classes is finite.
	\item We say that $\fE$ is \emph{strictly $k$-colourable} if it is $k$-colourable and strongly not $(k-1)$-colourable.
\end{enumerate}
\end{definition}

The goal of this section is to introduce a family of Esakia spaces $(\fE_n)_{n<\omega}$ such that every $\fE_n$ is strictly $(n+1)$-colourable.

\begin{definition}	
	Let $n<\omega$, we define the Esakia space $\fE_n=(P_n,\leq,\tau)$ consisting of the points $P_n:= \{x^l_{i} \mid l\leq 2^n , i<\omega \}\cup \{x_\infty\}$ which satisfy the following conditions:
	\begin{enumerate}
		\item $x^l_{i+1}\leq x^{l'}_{i}$ for $l'\neq l+1$, $i<\omega$;
		\item $x^l_{i+t}\leq x^{l'}_{i}$ for all $l,l'\leq 2^n$, $i<\omega$, $t\geq 2$;
		\item $x_\infty\leq x^{l}_{i}$ for all $l\leq 2^n$, $i<\omega$.
	\end{enumerate}
	\noindent and we let $\tau$ be the topology induced by the following subbasis
	\[ \{ (x^l_i)^\uparrow \mid l\leq 2^n, i<\omega   \}\cup \{ ((x^l_i)^\uparrow)^c \mid l\leq 2^n, i<\omega   \} \cup \{ \emptyset, (x_\infty)^\uparrow  \}.  \]
	For each $j<\omega$ we then let $L^j_n:=\{ x^l_{j} \mid l\leq  2^n \}$ and we refer to this as the \emph{j-th level} of the poset.
\end{definition}

\begin{lemma}
	For every $n<\omega$, $\fE_n$ is an Esakia space.
\end{lemma}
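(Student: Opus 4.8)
The plan is to verify the three Esakia axioms, after first confirming that $\leq$ --- read as the reflexive--transitive closure of the relations (1)--(3) --- is a partial order on $P_n$. Here $x_\infty$ is a global minimum, and the only comparabilities between distinct finite points run strictly downward in level index: if $x^l_i \leq x^{l'}_{i'}$ with the two points distinct and finite, then $i > i'$. Antisymmetry is then immediate, and transitivity is painless, since a chain $a \leq b \leq c$ of distinct finite points drops the level index by at least $2$ in total, so clause (2) applies directly to $a$ and $c$ and the exceptional restriction $l' \neq l+1$ in clause (1) never propagates along a chain.

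The second step is to pin down the topology, and this is the linchpin of the whole argument. I would first observe that each principal upset $(x^l_i)^\uparrow$ is \emph{finite}: every point above $x^l_i$ lies on a level $j \leq i$, there are only finitely many such levels, each containing $2^n+1$ points, and moreover $x_\infty \notin (x^l_i)^\uparrow$. Two consequences follow. First, each $x^l_i$ is isolated, since $\{x^l_i\} = (x^l_i)^\uparrow \cap \bigcap_{z > x^l_i}(z^\uparrow)^c$ is a \emph{finite} intersection of subbasic clopen sets. Second, every basic open containing $x_\infty$ is cofinite, being a finite intersection of complements $((x^l_i)^\uparrow)^c$ of finite sets. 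Consequently the clopen subsets of $\fE_n$ are exactly the finite sets avoiding $x_\infty$ together with the cofinite sets containing $x_\infty$; topologically $\fE_n$ is the one-point compactification of the discrete countable set $\{x^l_i\}$ with limit point $x_\infty$.

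Axioms (i) and (ii) are then short. For compactness, given an open cover, the member $O$ that covers $x_\infty$ is cofinite, so only finitely many further members are needed to cover the finite set $O^c$. For the separation axiom (ii), if $x \nleq y$ then $x$ cannot be $x_\infty$ (which lies below everything), so $x = x^l_i$, and the clopen upset $x^\uparrow = (x^l_i)^\uparrow$ contains $x$ but not $y$.

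The remaining axiom (iii) is where the real work lies, but the topological description reduces it to a single computation. The key lemma is that for every finite point $u = x^l_i$ the downset $u^\downarrow$ is cofinite and contains $x_\infty$: its complement consists of the points on levels $< i$ together with the finitely many exceptions on levels $i$ and $i+1$, a finite set. Granting this, let $U$ be a nonempty clopen set. Since $\{x_\infty\}$ is not open, $U$ must contain some finite point $u$, whence $U^\downarrow \supseteq u^\downarrow$ is cofinite; and since $x_\infty \leq u$ we also have $x_\infty \in U^\downarrow$, so $U^\downarrow$ is a cofinite set containing $x_\infty$ and is therefore clopen, while $\emptyset^\downarrow = \emptyset$ is clopen as well. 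The main obstacle is thus packaged entirely into establishing the clopen characterization and the cofiniteness of principal downsets; once these are secured, all three axioms fall out uniformly.
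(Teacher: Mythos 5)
Your proof is correct, but it takes a genuinely different route from the paper's. The paper verifies the three Esakia axioms directly: compactness and the separation axiom are handled essentially as you do (the paper inlines your ``neighbourhoods of $x_\infty$ are cofinite'' observation by noting that any open $U \ni x_\infty$ must contain some subbasic $((x^l_i)^\uparrow)^c$), but for the downward-closure axiom it performs an explicit case analysis on the least level $i$ meeting the clopen set $U$: if $|L^i_n\cap U|\geq 2$ or $x^0_i\in U$ it shows $U^\downarrow = U\cup ((L^i_n)^\uparrow)^c$, and if $L^i_n\cap U=\{x^m_i\}$ with $m>0$ it shows $U^\downarrow = U\cup ((x^{m-1}_{i+1})^\uparrow)^c$, in each case exhibiting $U^\downarrow$ as the union of $U$ with a single subbasic complement. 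You instead first establish a structural description of the topology --- every $x^l_i$ is isolated (via the finite intersection $(x^l_i)^\uparrow\cap\bigcap_{z>x^l_i}(z^\uparrow)^c$, legitimate because $(x^l_i)^\uparrow$ is finite), and the space is the one-point compactification of the countable discrete set of finite points, so that clopens are exactly the finite sets avoiding $x_\infty$ and the cofinite sets containing it --- and then axiom (iii) collapses to the single observation that each principal downset $(x^l_i)^\downarrow$ is cofinite and contains $x_\infty$, with no case split on how $U$ meets its top level. Your approach buys uniformity and a reusable global invariant: the clopen characterization in particular makes transparent why the clopen upsets of $\fE_n$ are precisely the finite ones together with $\fE_n$ itself, a fact the paper tacitly relies on later (e.g.\ the ``without loss of generality every $U_l$ is finite and nonempty'' step in Lemma \ref{long-lemma}). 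What the paper's computation buys in exchange is an explicit, local presentation of $U^\downarrow$ as a concrete finite union of (sub)basic sets, without needing to first classify all clopens; it is also silent on the partial-order verification you carry out, which is a small but genuine gap you fill (your observation that any two-step chain drops the level index by at least $2$, so clause (2) absorbs the exceptional restriction $l'\neq l+1$ of clause (1), is exactly the right reason transitivity is unproblematic).
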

\begin{proof}
	We verify the three conditions of the definition of Esakia spaces.
	
	\underline{Compactness}. Suppose  $C$ is an open cover of $\fE_n$ and let $U\in C$ be such that $x_\infty\in U$. If $U=(x_\infty)^\uparrow$ then $\{(x_\infty)^\uparrow\}$ is a finite subcover of $C$ and we are done. Otherwise, since $U$ is open and $x_\infty \in U$, we have by definition of the topology that $((x^l_i)^\uparrow)^c\subseteq U$ for some $i<\omega$ and $l\leq 2^n$. Then for every $j\leq  i+1$ and $l\leq 2^n$ let $U^l_j\in C$ be such that $x^l_j\in U^l_j$. It follows that $\{((x^l_i)^\uparrow)^c\}\cup \{U^l_j \mid j\leq i+1, l\leq 2^n  \}$ is a finite open subcover of $C$.
	
	\underline{Separation.} Given any two elements $x^l_i\nleq x^{l'}_{i'}$ we have immediately that $x^{l}_{i}\in (x^{l}_{i})^\uparrow$ but $x^{l'}_{i'}\notin (x^{l}_{i})^\uparrow$. The case of $x^l_i\nleq x_\infty$ is analogous.
	
	\underline{Downwards closure}. Finally, consider a clopen $U$. If $U=U^\downarrow$ then we are done. Otherwise let $i<\omega$ be the least index such that $L^i_n\cap U\neq \emptyset$. If $|L^i_n\cap U|\geq 2$ or $x^0_i\in U$ then $ U^\downarrow= U\cup (\{ x^l_{i} \mid l\leq 2^n  \}^\uparrow)^c$, which is clearly clopen. 	Otherwise, if $L^i_n\cap U= \{x^m_i\}$ for some $m>0$, then we have $U^\downarrow= U\cup ((x^{m-1}_{i+1})^\uparrow)^c$ which is also clopen. This completes our proof.
\end{proof}

\begin{remark}
	We notice that $\fE_0$ is exactly the  Esakia dual of the Rieger-Nishimura lattice, i.e. the free Heyting algebra with one generator. Since the only 0-colourable Esakia space is the one-point poset, it is immediate to see that $\fE_0$ is 1-colourable but not 0-colourable.
\end{remark}

\begin{figure}
	\begin{tikzpicture}[scale=0.50]
	\pgfmathsetmacro{\NODESIZE}{1.3pt}
	
	-- draw left column of nodes
	\foreach \y in {0,...,5}{
		\node[draw,circle, inner sep=\NODESIZE,fill,yshift=0em] (A \y) at (-4,3*\y) {};
	}
	\foreach \y in {0,...,5}{
		\node[draw,circle, inner sep=\NODESIZE,fill,yshift=0em] (B \y) at (-2,3*\y) {};
	}
	\foreach \y in {0,...,5}{
		\node[draw,circle, inner sep=\NODESIZE,fill,yshift=0em] (C \y) at (0,3*\y) {};
	}
	\foreach \y in {0,...,5}{
		\node[draw,circle, inner sep=\NODESIZE,fill,yshift=0em] (D \y) at (2,3*\y) {};
	}
	\foreach \y in {0,...,5}{
		\node[draw,circle, inner sep=\NODESIZE,fill,yshift=0em] (E \y) at (4,3*\y) {};
	}

		-- add invisible nodes for the descending line at infinity
		\node[draw=none] (end1a) at  (-4,-2) {};
		\node[draw=none] (end2a) at  (-2,-2) {};
		\node[draw=none] (end3a) at  (0,-2) {};
		\node[draw=none] (end4a) at  (2,-2) {};
		\node[draw=none] (end5a) at  (4,-2) {};

		\node[draw,circle, inner sep=\NODESIZE,fill,yshift=0em] (end) at  (0,-3) {};

	-- draw the ladder itself
	\foreach \y in {0,...,4}{
		\pgfmathparse{\y+1}
		\draw [-] (A \y) to (A \pgfmathresult);
		\draw [-] (A \y) to (C \pgfmathresult);
		\draw [-] (A \y) to (D \pgfmathresult);
		\draw [-] (A \y) to (E \pgfmathresult);
		
		\draw [-] (B \y) to (A \pgfmathresult);
		\draw [-] (B \y) to (B \pgfmathresult);
		\draw [-] (B \y) to (D \pgfmathresult);
		\draw [-] (B \y) to (E \pgfmathresult);
		
		\draw [-] (C \y) to (A \pgfmathresult);
		\draw [-] (C \y) to (B \pgfmathresult);
		\draw [-] (C \y) to (C \pgfmathresult);
		\draw [-] (C \y) to (E \pgfmathresult);
		
		\draw [-] (D \y) to (A \pgfmathresult);
		\draw [-] (D \y) to (B \pgfmathresult);
		\draw [-] (D \y) to (C \pgfmathresult);
		\draw [-] (D \y) to (D \pgfmathresult);
		
		\draw [-] (E \y) to (A \pgfmathresult);
		\draw [-] (E \y) to (B \pgfmathresult);
		\draw [-] (E \y) to (C \pgfmathresult);
		\draw [-] (E \y) to (D \pgfmathresult);
		\draw [-] (E \y) to (E \pgfmathresult);
	}

	\draw [ loosely dashed] (A 0) to (end1a);
	\draw [ loosely dashed] (B 0) to (end2a);
	\draw [ loosely dashed] (C 0) to (end3a);
	\draw [ loosely dashed] (D 0) to (end4a);
	\draw [ loosely dashed] (E 0) to (end5a);
	
	\end{tikzpicture}\caption{The Esakia space $\fE_2$}	
	\label{poset-example1}
\end{figure}

\begin{proposition}\label{k+1-colourability}
	The Esakia space $\fE_n$ is $ (n+1) $-colourable.
\end{proposition}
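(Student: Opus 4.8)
The plan is to produce a function $c\colon (n{+}1)\to\CU(\fE_n)$ and to check, directly from the recursive definition of $\eqrel^{c[n+1]}_\omega$, that it isolates every point; by \cref{duality-result} this is the same as exhibiting $n+1$ clopen upsets generating $\CU(\fE_n)$. Write $C=\{0,1,\dots,2^n\}$ for the set of column indices. Since there are $2^{n+1}$ subsets of $\{0,\dots,n\}$ and $2^{n+1}\geq 2^n+1=|C|$, I fix an injection $l\mapsto A_l$ from $C$ into the powerset of $\{0,\dots,n\}$; for $n\geq 1$ there are even $2^{n+1}-1\geq 2^n+1$ nonempty such subsets, so I take all $A_l$ nonempty (the case $n=0$ is the Rieger--Nishimura ladder already discussed in the preceding remark). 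I then set $g_j:=\{x^l_0\mid j\in A_l\}$ and $c(j):=g_j$ for $j\leq n$, and put $G:=c[n+1]$. Each $g_j$ consists of maximal points of $\fE_n$ (nothing lies properly above level $0$, since $x\leq y$ with $x\neq y$ forces the level index to strictly decrease), so it is an upset; and each singleton $\{x^l_0\}$ is clopen, its complement being the subbasic open $((x^l_0)^\uparrow)^c$. Hence every $g_j\in\CU(\fE_n)$.

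With this choice the points of nonempty $0$-colour are exactly the level-$0$ points, and distinct columns receive distinct colours; so $\eqrel^G_0$ already isolates every $x^l_0$ and separates level $0$ from all other points. Two structural features then drive the rest. First, by condition~(i) the immediate successors of $x^l_{i+1}$ at level $i$ are precisely the columns $l'\neq l+1$, so the \emph{single missing successor column} $l+1$ is the fingerprint of $l$. Second, by condition~(ii) every column occurs above $x^l_i$ already at level $i-2$; hence the part of the up-set from two levels below the point onwards is \emph{saturated} and identical for all columns, and all column- and level-distinguishing information sits in the top one or two cover-layers. In particular a point of level $\geq 2$ sees all $2^n+1$ colours among its successors, whereas $x^l_1$ sees every colour except $A_{l+1}$, which lets $\eqrel^G_1$ begin to separate low levels.

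The core is then a simultaneous induction showing that, for $k$ large enough, $\eqrel^G_k$ reads off both the column and the level of a point. For columns one uses the fingerprint: if $l\neq m$ and $i\geq 1$ then $x^{m+1}_{i-1}$ lies above $x^l_i$ but not above $x^m_i$, and by the induction hypothesis its class (column $m+1$, level $i-1$) is matched by no successor of $x^m_i$, whose level-$(i-1)$ successors omit column $m+1$ and whose column-$(m+1)$ successors all sit at strictly smaller height; so $x^l_i\neqrel^G_k x^m_i$. For levels one uses that a deeper point has, above it, a level-$i$ successor in a column other than its own (at least one such column exists since, for $n\geq 1$, condition~(i) leaves at least two admissible columns at level $i$), which the shallower point---whose only level-$i$ successor is itself---cannot match; and $x_\infty$, having successors of unbounded height, is separated from each finite point $x^l_i$ at a finite stage and is in any case the unique minimum. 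Assembling these separations gives that $\eqrel^G_\omega$ is the identity, i.e.\ $\fE_n$ is $G$-coloured by an $(n{+}1)$-element set, as required.

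I expect the main obstacle to be exactly this last induction, and within it the bookkeeping forced by condition~(ii). Because saturation erases all column information below the penultimate cover-layer, one cannot detect a column or a level ``globally'': the argument must always localise the relevant defect to a specific layer and then certify, via the inductive hypothesis one level above, that the back-and-forth cannot disguise a point of one column or height as a point of another. Getting the two inductions (on columns and on levels) to feed each other correctly---and pinning down the exact finite stage $k$ at which each pair is separated, together with the care needed to prevent a point being confused with its own lower neighbour---is where the real work lies; the failure of a careless colouring in the $n=0$ ladder is a warning that the choice of $G$ genuinely matters.
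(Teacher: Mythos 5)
Your proposal is correct and takes essentially the same route as the paper: the identical colouring of level $0$ via an injection of the $2^n+1$ columns into $\wp(n+1)$, followed by an upward induction using the fact that distinct points at level $i+1$ see distinct subsets of level $i$ (the missing-column fingerprint), with $x_\infty$ isolated as the unique point seeing all $\omega$-types. Your additional bookkeeping (insisting on nonempty colour sets for $n\geq 1$ and deferring $n=0$ to the Rieger--Nishimura remark, plus the explicit column/level double induction) merely spells out details the paper's shorter proof leaves implicit.
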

\begin{proof}
	Since $n<n+1$ there is an injection $e:2^{n}+1\to \wp(n+1)$.
	 We use this injection to define a colouring over $\fE_n$. Let $c:n+1\to \CU(\fE_n)$ be defined by letting 
	\[c(k)=\{x_0^l\in \fE_n \mid     k\in e(l)  \}.\]
	By definition of the Esakia topology over $\fE_n$ it is clear that this map is a well-defined colouring of $\fE_n$. Then for any $l<l'<2^n+1$ we have by the injectivity of $e$  that $e(l)\neq e(l')$. Thus for some $k<n+1$ we have (without loss of generality) that $k\in e(l)\setminus e(l')$, which shows that $x_0^l\neqrel^{c[n+1]}_0 x_0^{l'}$. Moreover, since by construction of $\fE_n$ any two points at level $i+1$ see different elements at level $i$, it follows that whenever $(x^l_{i})_{l<2^n+1}$ all have different $i$-types, then  $(x^l_{i+1})_{l<2^n+1}$ have different $(i+1)$-types. Also, $x_\infty$ is the only point that sees all $\omega$-types. Thus, $\fE_n$ is $ (n+1) $-colourable.
\end{proof}

\begin{lemma}\label{lemma_nextlevel}
	Let $c:m\to \CU(\fE_n)$ be a colouring of $\fE_n$ and suppose
	\begin{enumerate}
		\item $|L_n^{i}/\eqrel_\omega^{c[m]}| \leq k\leq 2^n$;
		\item For all $x,y\in L_n^{i+1}$ we have $x\eqrel^{c[m]}_{0}y$.
	\end{enumerate}
	Then $|L_n^{i+1}/\eqrel_\omega^{c[m]}| \leq k\leq 2^n$.
\end{lemma}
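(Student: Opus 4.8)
The plan is to exploit the very rigid upward geometry of $\fE_n$. First I would record the shape of the up-set of a point on level $i+1$: by clauses (1)--(3) of the definition of $\fE_n$, the point $x^l_{i+1}$ lies below every point of level $i$ except $x^{l+1}_i$ (and $x^{2^n}_{i+1}$ lies below all of level $i$, since $2^n+1$ is not an index), and it lies below every point of every level $<i$. As each level is an antichain, the strict up-set $(x^l_{i+1})^\uparrow\setminus\{x^l_{i+1}\}$ is exactly $(L_n^{i}\setminus\{x^{l+1}_i\})\cup\bigcup_{j<i}L_n^{j}$. Thus the up-sets of the various level-$(i+1)$ points agree completely below level $i$ and differ only in which single level-$i$ point is omitted.

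Writing $G:=c[m]$ for brevity, the core of the argument is that the $\omega$-type of $x^l_{i+1}$ depends only on the set
\[ V(x^l_{i+1}) := \{\, t(z/G) \mid z\in (x^l_{i+1})^\uparrow,\ z\neq x^l_{i+1}\,\} \]
of $\omega$-types occurring strictly above it. I would prove, by induction on the rank $r$, that $V(x^{l_1}_{i+1})=V(x^{l_2}_{i+1})$ implies $x^{l_1}_{i+1}\eqrel^G_r x^{l_2}_{i+1}$; the base case $r=0$ is precisely hypothesis (2). For the step, the set of $\eqrel^G_r$-types realised in $(x^{l}_{i+1})^\uparrow$ splits as the singleton $\{t_r(x^l_{i+1}/G)\}$ together with the $r$-types realised in the strict up-set. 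The latter coincide for $l_1,l_2$ because, by \cref{refinement}, $\eqrel^G_\omega$ refines $\eqrel^G_r$, so any two regions realising the same $\omega$-types realise the same $r$-types, and $V(x^{l_1}_{i+1})=V(x^{l_2}_{i+1})$ gives exactly this; the two singleton contributions agree by the induction hypothesis. Hence the full up-sets realise the same $\eqrel^G_r$-types and $x^{l_1}_{i+1}\eqrel^G_{r+1}x^{l_2}_{i+1}$. Intersecting over all $r$ yields $x^{l_1}_{i+1}\eqrel^G_\omega x^{l_2}_{i+1}$, so $|L_n^{i+1}/\eqrel^G_\omega|$ is at most the number of distinct values of $V$.

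It then remains to bound the number of values of $V$. Put $A:=L_n^{i}/\eqrel^G_\omega$ (so $|A|\le k$ by (1)) and let $B$ be the set of $\omega$-types realised strictly below level $i$; both are independent of $l$. Then $V(x^l_{i+1})$ equals $A\cup B$ with the single type $t(x^{l+1}_i/G)$ deleted, and this deletion actually removes a type only when $t(x^{l+1}_i/G)$ is realised by no other point of level $i$ and by no point below level $i$. Letting $U$ be the set of such types, the possible values of $V$ are $A\cup B$ together with the sets $(A\cup B)\setminus\{\theta\}$ for $\theta\in U$, i.e.\ at most $1+|U|$ of them. The decisive input is a pigeonhole count forced by $k\le 2^n$: level $i$ has $2^n+1$ points but at most $k\le 2^n$ distinct $\omega$-types, so some type is realised at least twice and therefore lies outside $U$, whence $|U|\le|A|-1$ and $V$ takes at most $|A|\le k$ values. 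Combining, $|L_n^{i+1}/\eqrel^G_\omega|\le k\le 2^n$.

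The main obstacle I anticipate is the apparent circularity in the central claim: since each point belongs to its own up-set, naively comparing the realised $r$-type-sets of two level-$(i+1)$ points already presupposes that the two points share an $r$-type. This is resolved cleanly only by making the induction on $r$ simultaneous in all pairs, so that the induction hypothesis delivers the agreement of the self-types at rank $r$, while $V$ controls everything strictly above. The second delicate point is purely arithmetical: the naive count gives $1+|U|\le k+1$, and it is exactly the strict inequality $2^n+1>2^n\ge k$ that shaves off the extra unit, so the hypothesis $k\le 2^n$ cannot be dropped.
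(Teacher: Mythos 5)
Your proof is correct and takes essentially the same route as the paper's: restrict attention to level $i$ (the levels above being seen uniformly by all of $L_n^{i+1}$), observe that only $\omega$-types realised by a unique point can discriminate between level-$(i+1)$ elements, each of which omits at most one level-$i$ point, and use the pigeonhole $2^n+1 > k$ to cut the count down to $k$. The only difference is one of rigor: you carry out explicitly, via the rank induction with hypothesis (ii) as base case and \cref{refinement} for the matching step, what the paper dismisses with ``clearly the type of an element in $L_n^{i+1}$ is determined by the equivalence classes $A_j$ it sees.''
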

\begin{proof}
	Fix a colouring $c:m\to \CU(\fE_n)$ and suppose (i) and (ii) hold. By construction, every element in $L_n^{i+1}$ sees every element in $L_n^{i-n}$ for $n\geq 1$, so to differentiate the types of elements in $L_n^{i+1}$ we can restrict attention to the elements in $L_n^{i}$. Let $A_j$, $j< k$ partition $L_n^{i}$ according to the type. Clearly the type of an element in $L_n^{i+1}$ is determined by the equivalence classes $A_j$ it sees. If $|A_j|\geq 2$, then by construction $L_n^{i+1}\subseteq A_j^\downarrow$, so the only equivalence classes that discriminate between elements of $L_n^{i+1}$ are those of size 1. In particular, each element from $L_n^{i+1}$ must see every equivalence class $A_j$ but at most one of size 1. Since the number of such equivalence classes is $q<k<2^n+1$, our claim follows readily.
\end{proof}

\begin{lemma}\label{long-lemma}
	Let $c:n\to \CU(\fE_n)$, then there is $j<\omega$ such that at least two elements in $L_n^j$ have the same $ \omega $-type and every element in $L_n^{q}$ for $q> j$ has the same 0-type.
\end{lemma}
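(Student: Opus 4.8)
The plan is to split the two clauses: the clause about $0$-types will come from a depth bound on $\langle G\rangle$, and the clause about coinciding $\omega$-types will be reduced, via \cref{lemma_nextlevel}, to finding a single level realising few $\omega$-types. For a clopen upset $U$ write $d(U)$ for the greatest $i$ with $U\cap L^i_n\neq\emptyset$. As in the proof that $\fE_n$ is an Esakia space, a clopen upset $U\neq P_n$ cannot contain the minimum $x_\infty$, so $U$ lies inside a finite union of cones $(x^l_i)^\uparrow$ and $d(U)<\omega$. One checks $d(V\land W),d(V\lor W)\le\max\{d(V),d(W)\}$, and since $V\to W=((V\setminus W)^\downarrow)^c$ omits every level $\ge d(V)+2$, also $d(V\to W)\le d(V)+1$. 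Putting $M:=\max\{d(c(k)) : c(k)\neq P_n\}$, an induction on $\rank$ shows that every $U\in\langle G\rangle$ of rank $\le m$ is either $P_n$ or satisfies $d(U)\le M+m$. By \cref{lemmaquotient} this gives $|L^i_n/\eqrel^{c[n]}_m|=1$ whenever $i>M+m$; taking $m=0$ is exactly the second clause: for every $q>M$ the points of $L^q_n$ share a $0$-type.

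Hence the second clause holds for \emph{every} $j\ge M$, and so the first clause would follow from a single level $j\ge M$ with $|L^j_n/\eqrel^{c[n]}_\omega|\le 2^n$. Indeed, at each $i\ge j$ the hypotheses of \cref{lemma_nextlevel} are met (constant $0$-type holds there by the previous paragraph), so the bound $\le 2^n$ propagates to all deeper levels; as each level has $2^n+1$ points, every level $\ge j$ then carries two $\eqrel^{c[n]}_\omega$-equivalent points. Thus it remains only to exhibit one level $\ge M$ realising at most $2^n$ distinct $\omega$-types.

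I would do this by contradiction, assuming every level $i\ge M$ realises all $2^n+1$ types. The analysis behind \cref{lemma_nextlevel} shows that all-distinctness at level $i+1$ forces $t(x^1_i/G),\dots,t(x^{2^n}_i/G)$ to be \emph{fresh singletons}, each realised at a unique point of $\bigcup_{j\le i}L^j_n$; otherwise deleting the single omitted predecessor $x^{l+1}_i$ from the cone of $x^l_{i+1}$ would not change the family of classes seen, collapsing two of the $2^n+1$ types. A short case analysis on the order of $\fE_n$ then upgrades this to: \emph{every} point $x^l_i$ with $i\ge M+2$ is an entire $\eqrel^{c[n]}_\omega$-class, since any $z$ with $z\eqrel^{c[n]}_\omega x^l_i$ would, in order to see the fresh singletons visible from $x^l_i$, have to be one of them. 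Consequently the quotient map $p\colon\fE_n\twoheadrightarrow\fE_G$ of \cref{duality-result} is injective on each such $L^i_n$, and because $x^a_i,x^b_i$ are incomparable while each $x^b_i$ has singleton fibre, the back condition for $p$ forces $p(x^a_i)\nleq p(x^b_i)$; so $p(L^i_n)$ is an antichain of size $2^n+1$ in $\fE_G$.

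This is the contradiction: $\fE_G$ is the dual of the $n$-generated algebra $\langle G\rangle$, hence embeds into the dual of the free $n$-generated Heyting algebra, which has width $|\wp(n)|=2^n$, so $\fE_G$ has no antichain of size $2^n+1$. I expect the two substantive steps to be, first, the order-theoretic case analysis that turns \emph{every} deep point (not just the $2^n$ omitted ones) into its own $\eqrel^{c[n]}_\omega$-class, so that the full width $2^n+1$ is transferred to $\fE_G$; and second, the width bound for the dual of an $n$-generated algebra, which is where the numerical gap $2^n<2^n+1$ underlying the whole construction finally bites. Everything before these two steps is routine manipulation of \cref{refinement,lemmaquotient,lemma_nextlevel}.
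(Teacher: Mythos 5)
Your proof works up to its last step, and then breaks at exactly the point where you say the construction ``finally bites.'' The reduction is fine: the depth bound $d(U)\le M+m$ for rank-$\le m$ elements of $\langle G\rangle$ (using that a clopen upset avoiding $x_\infty$ is finite, and that $\to$ increases depth by at most one) correctly yields the $0$-type clause via \cref{lemmaquotient}, and the propagation of the bound $\le 2^n$ down the levels via \cref{lemma_nextlevel} correctly reduces everything to exhibiting one level $\ge M$ with at most $2^n$ $\omega$-types. The fresh-singleton analysis under the all-distinctness assumption is also essentially sound (though the promised case analysis has to rule out a deep point $z$ imitating $x^l_i$, which it can, since the cone of such a $z$ realises the fresh singleton types of level $i$ itself, while the cone of $x^l_i$ does not), and the transfer of $L^i_n$ to an antichain of size $2^n+1$ in $\fE_G$ via singleton fibres and the back condition of $p$ is correct.

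The gap is the width bound: it is \emph{false} that the Esakia dual of the free $n$-generated Heyting algebra has width $2^n$ for $n\ge 2$. What is true is that it has exactly $2^n$ \emph{maximal} points (maximal points are separated by their $0$-types alone, and there are only $2^n$ colours); below the maximal level the width explodes. Already in the $2$-universal model (de Jongh/Bellissima), which sits inside the dual of $F(2)$, the points of depth $2$ form an antichain with well over four elements --- one for each admissible pair (antichain of top points, colour) --- and the width is unbounded as the depth grows; in fact the dual of $F(2)$ contains infinite antichains. Your claim happens to hold for $n=1$ (the Rieger--Nishimura ladder has width $2$), which is probably where the overgeneralisation comes from. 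Since your antichain $p(L^i_n)$ lies at depth roughly $i$, far below the maximal points, no contradiction arises and the argument collapses. The paper gets the needed bound of $2^n$ types by a completely different mechanism: it orders the colours $U_0,\dots,U_{n-1}$ by the last levels $i_l$ at which each $U_l$ meets $\fE_n$, and shows by induction on $l$ (Cases A and B, according to whether $|L^{i_{l+1}}_n\cap U_{l+1}|>1$ or $=1$) that each additional colour at most doubles the number of $\omega$-types at the relevant level, giving at most $2^{l+1}$ types at level $L^{i_l}_n$ over $c[l+1]$, hence $\le 2^n$ types against $2^n+1$ points at level $i_{n-1}$. To repair your proof you would need to replace the global width appeal by some such colour-by-colour counting, i.e.\ essentially redo the paper's induction.
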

\begin{proof} Let $c(l)=U_l$ for all $l<n$ and suppose without loss of generality that every $U_l$ is finite and nonempty, for otherwise $U_l=\emptyset$ or $U_l=\fE_n$ and thus does not contribute to distinguish the type of elements of $\fE_n$. For each $l<n$ we then denote by $i_l$  the least index such that $U_l\cap L^{i_l+1}_n=\emptyset$. We assume without loss of generality that $i_k\leq i_{k'}$ for $k<k'$ and we study types in the interval of levels between $L^{i_{0}}_n$ and $L_n^{i_{n}}$.
	
	We proceed by induction and we count for any $m\leq n$ how many $\omega$-types are determined by the upsets $(U_l)_{l<m}$ at level $L_n^{i_{m}}$. The claim is that for every $l<n$ the number of $\omega$-types over $c[l+1]$ of elements of $L_n^{i_{l}}$ is bounded above by $2^{l+1}$.
	
	\underline{Induction Base.} Firstly, we consider the types induced by $c[1]=\{U_0\}$.  If $i_0=0$ then it is clear that there are only two $\omega$-types at level $L^{i_0}_n$ induced by $U_0$ and $U_0^c$.  If $i_0>0$, then we notice that, by construction, the $\omega$-type of any element  $x\in L^{i_0}_n$  is fully determined by $U_0\cap (L^{i_0}_n\cup L^{i_0-1}_n)$.  Also, we remark that if $x,y\in L^{i_0-1}_n$ and $x\eqrel^{c[1]}_{0}y$ then since $L^{i_0-p}\subseteq U_0$ for any $p>1$ it follows that $x\eqrel^{c[1]}_{\omega}y$. We distinguish two cases.
	
	\underline{Case A.}  Suppose that $|L^{i_0}_n \cap U_0|>1$. By construction of $\fE_n$ we immediately conclude that elements from $L^{i_0}_n$ see the same $\omega$-types from $ L^{i_0-1}_n $, hence the only two $\omega$-types realized at level $L^{i_0}_n$ are those determined by the sets $L^{i_0}_n \cap U_0$ and $L^{i_0}_n\cap U_0^c$.
	
	\underline{Case B.} Suppose that $|L^{i_0}_n \cap U_0|=1$. Then we can  assume without loss of generality that $L^{i_0}_n \cap U_0=\{x^0_{i_0}\}$ and $L^{i_0}_n\cap U_0^c=\{ x^t_{i_0} \mid t>0  \}$. Then, by construction of $\fE_n$, it follows that $L^{i_0-1}_n \cap U_0=\{ x^t_{i_0-1} \mid t\neq 1  \}$ and $L^{i_0-1}_n\cap U_0^c=\{x^1_{i_0-1}\}$. Also, for every $m>0$ we have that $x^m_{i_0}\leq  x^1_{i_0-1}$ and $x^m_{i_0}\leq  x^t_{i_0-1}$ for some $t\neq 1$. Since the elements in $\{ x^t_{i_0-1} \mid t\neq 1  \}$ have the same $\omega$-type over $c[1]$, it follows that the only two $\omega$-types realized at $L^{i_0}_n$ are that of $x^0_{i_0}$ and the type of any $x^m_{i_0}$ for $m>0$. 
	
	\underline{Induction Step.} Suppose by induction that $c[l+1]$ determines at most $2^{l+1}$-many $\omega$-types at level $L_n^{i_{l}}$.  If $i_l=i_{l+1}=0$, then the number of $\omega$-types over $c[l+2]$ of elements in $L_n^0$ is fully determined by their types over $c[l+1]$ together with the set $U_{l+1}\cap L_n^0$, namely for $x,y\in L_n^0$ we have
	\[x\eqrel^{c[l+2]}_{\omega} y \Longleftrightarrow x\eqrel^{c[l+1]}_{\omega} y \text{ and }x\eqrel^{\{U_{l+1}\}}_{0} y \]
	which immediately gives us an upper bound of $2^{l+2}$ possible $\omega$-types over $c[l+2]$. 
	
	We can thus assume that $i_{l+1}>0$. Moreover, if $i_l<i_{l+1}$, then by induction hypothesis and \cref{lemma_nextlevel} we have that the elements of $ L^{i_{l+1}-1}_n $ realize at most $2^{l+1}$-many $\omega$-types over $c[l+1]$. Similarly, if $i_l=i_{l+1}$ then by induction hypothesis we have that  the elements of $ L^{i_{l+1}}_n $ realize at most $2^{l+1}$-many $\omega$-types over $c[l+1]$, whence, by construction of $\fE_n$, also the elements of $ L^{i_{l+1}-1}_n $ realize at most $2^{l+1}$-many $\omega$-types over $c[l+1]$. We next proceed with the same case-distinction that we considered in the base case.
	
	\underline{Case A.}  Suppose that $|L^{i_{l+1}}_n \cap U_{l+1}|>1$. By construction of $\fE_n$ we immediately conclude that elements from $L^{i_{l+1}}_n$ which see the same $\omega$-types over $c[l+1]$ from $ L^{i_{l+1}-1}_n $ also see the same $\omega$-types over $c[l+2]$ from $ L^{i_{l+1}-1}_n $. It follows that, exactly as above, the $\omega$-types over $c[l+2]$ of the elements in $L^{i_{l+1}}_n$ are determined by their $\omega$-types over $c[l+1]$ together with the set $U_{l+1}\cap L^{i_{l+1}}_n$, in the sense that for $x,y\in L^{i_{l+1}}_n$
	\[x\eqrel^{c[l+2]}_{\omega} y \Longleftrightarrow x\eqrel^{c[l+1]}_{\omega} y \text{ and }x\eqrel^{\{U_{l+1}\}}_{0} y \]
	which gives us at most $2^{l+2}$ possible $\omega$-types over $c[l+2]$ for elements of $L^{i_{l+1}}_n$.
	
	\underline{Case B.} Suppose that $|L^{i_{l+1}}_n \cap U_{l+1}|=1$. Then we can  assume without loss of generality that $L^{i_{l+1}}_n \cap U_{l+1}=\{x^0_{i_{l+1}}\}$ and $L^{i_{l+1}}_n\cap U_{l+1}^c=\{ x^t_{i_{l+1}} \mid t>0  \}$. Then, by construction of $\fE_n$, it follows that $L^{i_{l+1}-1}_n \cap U_{l+1}=\{ x^t_{i_{l+1}-1} \mid t\neq 1  \}$ and $L^{i_{l+1}-1}_n\cap U_{l+1}^c=\{x^1_{i_{l+1}-1}\}$. 
	
	By induction hypothesis and the remarks above, the elements in  $\{ x^t_{i_{l+1}-1} \mid t\neq 1  \}$ have at most $2^{l+1}$-many different $\omega$-types over $c[l+1]$ and, since they all belong to $U_{l+1}$, they have at most $2^{l+1}$-many different $\omega$-types also over $c[l+2]$. Now, for every $m>0$ we have that $x^m_{i_{l+1}}\leq  x^1_{i_{l+1}-1}$ and, also
	 \[ |\{ x^t_{i_{l+1}-1} \mid t\neq 1  \}|-1\leq |(x^m_{i_{l+1}})^\uparrow \cap \{ x^t_{i_{l+1}-1} \mid t\neq 1  \} | \leq  |\{ x^t_{i_{l+1}-1} \mid t\neq 1  \}|.  \] 
	 Thus the type of an element $x^m_{i_{l+1}}$ with $m>0$ is determined by which one (if any) of the element from $\{ x^t_{i_{l+1}-1} \mid t\neq 1  \}$ it does not see. This gives us a total of at most $2^{l+1}+1$ possible $\omega$-types. Together with the  type of $x^0_{i_{l+1}}$ we obtain at most $2^{l+1}+2\leq 2^{l+2}$ possible $\omega$-types over $c[l+2]$ of elements from $L^{i_{l+1}}_n$.
	
	Finally, it follows from the previous induction that  the number of $\omega$-types of elements in $L^{i_{n-1}}_{n}$ over $c[n]$ is bounded above by $2^{n}$. Since $\fE_n$ has width $2^{n}+1$, it follows that at least two elements of $L^{i_{n-1}}_{n}$ have the same $\omega$-type. Moreover, by choice of $i_{n-1}$, every element in $L^{q}_n$ for $q> i_{n-1}$ has the same 0-type over $c[n]$.
\end{proof}

\begin{lemma}\label{lemma11}
	Suppose two elements in  $L_n^{i}$  have the same $\omega$-type over some colouring $c:m\to\CU(\fE_n)$ and suppose for every $q> i$  elements in  $L_n^{q}$ have the same 0-type. Then every element of $L^{t}_n$ for $t\geq i+2^n+1$ has the same $\omega$-type.
\end{lemma}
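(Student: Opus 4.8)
The plan is to track, for each level $t\ge i$, the number $s_t:=|L_n^{t}/\eqrel_\omega^{c[m]}|$ of $\omega$-types realized on that level together with the set $S_t\subseteq\{0,\dots,2^n\}$ of positions $p$ for which $x^p_t$ is \emph{locally isolated}, i.e.\ its $\omega$-type is realized by no other point of $L_n^{t}$. The underlying principle I would first isolate from \cref{lemmaquotient} is that for points strictly above level $i$ — where by hypothesis all points of a common level share the same $0$-type — the $\omega$-type of a point is completely determined by the \emph{set} of $\omega$-types realized strictly above it: if $x\eqrel^{c[m]}_0 y$ and $\{t(z/c[m])\mid z\ge x\}=\{t(w/c[m])\mid w\ge y\}$, then a short induction on $k$ (matching each successor of $x$ with an $\eqrel_k^{c[m]}$-equivalent successor of $y$, using $\eqrel_\omega^{c[m]}\subseteq\eqrel_k^{c[m]}$) gives $x\eqrel^{c[m]}_k y$ for all $k$, hence $x\eqrel^{c[m]}_\omega y$.

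Next I would read off the transition from the combinatorics of $\fE_n$. By construction $x^l_{t+1}$ lies below every point of the levels $L_n^{t-1},L_n^{t-2},\dots$ and below every point of $L_n^{t}$ except $x^{l+1}_t$, while $x^{2^n}_{t+1}$ lies below all of $L_n^{t}$. Hence all points of $L_n^{t+1}$ see exactly the same $\omega$-types on the levels strictly above $t$, and two of them can differ only through the single point each omits on level $t$; omitting $x^{l+1}_t$ changes the set of visible $\omega$-types only when $x^{l+1}_t$ is locally isolated, i.e.\ only when $l+1\in S_t$, and $x^0_t$ is omitted by nobody. This yields two facts, valid as long as $s_t\le 2^n$ (which forces, by pigeonhole on the $2^n+1$ points of a level, at least one non-isolated type and hence $|S_t|\le 2^n-1$): first, the visible set is either the full set of level-$t$ types or that set minus one locally isolated type indexed by $S_t\setminus\{0\}$, giving $s_{t+1}\le 1+|S_t\setminus\{0\}|\le 2^n$; and second, any locally isolated point of $L_n^{t+1}$ must omit a locally isolated point of $L_n^{t}$ at a position $\ge 1$ — since $s_t\le 2^n$ the ``full'' type is realized by $x^{2^n}_{t+1}$ together with at least one further point and so is never isolated — whence $S_{t+1}\subseteq (S_t\setminus\{0\})-1$. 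The bound $s_{t+1}\le 2^n$ lets both facts be iterated from the base level.

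Finally I would run the recursion. The hypothesis that two points of $L_n^{i}$ share an $\omega$-type gives $s_i\le 2^n$, and trivially $\max S_i\le 2^n$. The shift $S_{t+1}\subseteq(S_t\setminus\{0\})-1$ lowers the maximal position by at least $1$ at each step, so $\max S_{i+2^n}\le 0$, i.e.\ $S_{i+2^n}\subseteq\{0\}$; then $s_{i+2^n+1}\le 1+|S_{i+2^n}\setminus\{0\}|=1$. Once a level carries a single $\omega$-type it has $S_t=\emptyset$, so the bound $s_{t+1}\le 1$ propagates upward, and every $L_n^{t}$ with $t\ge i+2^n+1$ carries a single $\omega$-type, as required.

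I expect the main obstacle to be exactly the issue that forces the definition of $S_t$ via \emph{local} isolation: a priori the $\omega$-type omitted on level $t$ could reappear on some higher (smaller-index) level, in which case omitting it would not change the visible set at all. Phrasing both the count $s_{t+1}\le 1+|S_t\setminus\{0\}|$ and the shift $S_{t+1}\subseteq(S_t\setminus\{0\})-1$ as upper bounds in terms of isolation \emph{within a single level} sidesteps this, since any such reappearance only decreases the number of $\omega$-types, which is harmless for an upper bound heading to $1$. The remaining care is purely bookkeeping: checking that $x^0_t$ is never omitted and that $x^{2^n}_{t+1}$ guarantees the ``full'' type is shared whenever $s_t\le 2^n$.
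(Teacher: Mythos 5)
Your proof is correct, and it is a genuinely different organisation of the argument from the paper's. Both proofs run on the same two engines: (a) once all points of a level share a $0$-type, the $\omega$-type of a point is determined by the set of $\omega$-types realized strictly above it (your short back-and-forth induction via $\eqrel^{c[m]}_\omega\subseteq\eqrel^{c[m]}_k$; in the paper this is implicit in phrases like ``these both see every type from $L_n^{i+1}$''), and (b) each point $x^l_{t+1}$ omits exactly one point, $x^{l+1}_t$, so the omitted index shifts down by one per level. The paper exploits these \emph{positively}: starting from the duplicated pair at level $i$ it exhibits an explicit block of points of the full type growing downward from position $2^n$ by one position per level ($x^{2^n}_{i+1}\eqrel^{c[m]}_{\omega} x^j_{i+1}$, then $x^{2^n-1}_{i+2}\eqrel^{c[m]}_{\omega} x^{2^n}_{i+2}$, and so on), covering all of $L_n^{i+2^n+1}$ after $2^n+1$ steps. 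You instead run the complementary \emph{negative} bookkeeping, tracking the set $S_t$ of locally isolated positions through the shift $S_{t+1}\subseteq(S_t\setminus\{0\})-1$ together with the count $s_{t+1}\le 1+|S_t\setminus\{0\}|$, and letting the shift exhaust $S_t$ after $2^n$ steps. Your version buys three things: it is fully rigorous where the paper says ``by proceeding in this way''; it handles \emph{all} isolated positions at once rather than only the chain spawned by the given duplicated pair; and it explicitly addresses the subtlety --- which the paper never mentions --- that an omitted type can reappear at a higher level, by phrasing both facts as upper bounds. It also yields as a by-product the uniform per-level bound $s_t\le 2^n$ for all $t\ge i$, which is precisely the kind of quantitative control the paper later invokes in \cref{k-generated.Heyting}(iii). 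The paper's version, in exchange, is shorter and more concrete. One step you should spell out when writing this up: in your second fact, the claim that the full type at level $t+1$ is never isolated needs not just ``some type at level $t$ is duplicated'' but ``some non-isolated position $p\ge 1$ exists''; this follows because a duplicated type has at least two realizing positions, of which at most one is $0$, so that $x^{p-1}_{t+1}$ joins $x^{2^n}_{t+1}$ in the full type --- a one-sentence pigeonhole that your sketch asserts without proof.
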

\begin{proof}
	We reason as follows. If two elements among $ \{ x^l_i \mid  l<2^n+1  \} $ have the same $\omega$-type over $c[m]$, then it follows immediately by the construction of $\fE_n$ that $x^{2^n}_{i+1}\eqrel^{c[m]}_{\omega} x^j_{i+1}$ for some   $j<2^n$, since by assumption such elements also have the same 0-type over $c[m]$. Then $x^{2^n-1}_{i+2}\eqrel^{c[m]}_{\omega} x^{2^n}_{i+2}$, as these both see every type from $L_n^{i+1}$ and, for the same reason, we have that $x^{2^n-2}_{i+3}\eqrel^{c[m]}_{\omega}x^{2^n-1}_{i+3}\eqrel^{c[m]}_{\omega} x^{2^n}_{i+3}$. By proceeding in this way, we obtain that after $2^n+1$ steps every element of $L_n^{i+2^n+1}$ has the same $\omega$-type over $c[m]$. It follows that  for $t\geq i+2^n+1$ every element of $L^{t}_n$ see exactly the same $\omega$-types over $c[m]$ and thus has the same $\omega$-type, showing that the colouring $c[m]$ has no effect after level $L_n^{i+2^n+1}$.
\end{proof}

\begin{proposition}\label{strong-non-colourability}
	$\fE_n$ is strongly not n-colourable.
\end{proposition}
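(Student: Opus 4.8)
The plan is to read the proposition off as a direct consequence of \cref{long-lemma} and \cref{lemma11}, which together force all but finitely many levels of $\fE_n$ to collapse into a single $\omega$-type. Fix an arbitrary colouring $c:n\to\CU(\fE_n)$; by definition of strong non-colourability it suffices to show that $\eqrel^{c[n]}_\omega$ has only finitely many equivalence classes.

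First I would apply \cref{long-lemma} to the colouring $c:n\to\CU(\fE_n)$ to obtain an index $j<\omega$ such that at least two elements of $L^j_n$ share the same $\omega$-type over $c[n]$ and, for every $q>j$, all elements of $L^q_n$ have the same $0$-type over $c[n]$. These are precisely the two hypotheses of \cref{lemma11}, taken with $i=j$ and $m=n$, so I would invoke that lemma to conclude that every element lying in a level $L^t_n$ with $t\geq j+2^n+1$ carries one and the same $\omega$-type. The feature of \cref{lemma11} that is essential here is that the collapse is to a \emph{single} $\omega$-type across the whole tail $\{\, L^t_n \mid t\geq j+2^n+1 \,\}$ simultaneously, and not merely within each individual level; without this the infinitely many upper levels could in principle contribute infinitely many distinct types.

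It then remains to count. The levels $L^0_n,\dots,L^{j+2^n}_n$ are finite in number, namely $j+2^n+1$ of them, and each contains the $2^n+1$ points $x^l_t$, so together they comprise at most $(j+2^n+1)(2^n+1)$ points and hence realise at most that many $\omega$-types over $c[n]$. Adding the single $\omega$-type shared by all points at levels $\geq j+2^n+1$, together with the at most one further $\omega$-type realised by the bottom point $x_\infty$, we obtain a finite upper bound on the number of $\eqrel^{c[n]}_\omega$-classes. Since $c$ was an arbitrary map $n\to\CU(\fE_n)$, this shows that $\fE_n$ is strongly not $n$-colourable.

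The genuine difficulty of the argument has already been absorbed into the two lemmas cited above, so the only point in this proof that requires care is the verification that \cref{lemma11} does yield one common $\omega$-type for the entire tail of high levels: this is exactly what prevents an accumulation of infinitely many types along the infinitely many upper levels and reduces the whole count to the finitely many points sitting strictly below level $j+2^n+1$, plus $x_\infty$.
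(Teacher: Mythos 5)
Your proof is correct and is essentially the paper's own argument: the paper's proof consists of the single line that the proposition ``follows immediately by the previous lemmas,'' and you have simply made that explicit by chaining \cref{long-lemma} into \cref{lemma11} (with $i=j$, $m=n$) and counting the finitely many points at levels $L^0_n,\dots,L^{j+2^n}_n$ plus the single tail type and $x_\infty$. Your flagged point of care --- that \cref{lemma11} collapses the entire tail $\{L^t_n \mid t\geq j+2^n+1\}$ to one common $\omega$-type rather than one type per level --- is indeed the reading the paper intends, as its proof of that lemma concludes that the colouring ``has no effect after level $L_n^{i+2^n+1}$.''
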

\begin{proof}
	This follows immediately by the previous lemmas.
\end{proof}

\section{Strictly $n$-Generated Heyting Algebras}

\begin{definition} Let $\cL$ be a first-order signature with no relation symbol, let $A$ be an infinite $\cL$-algebra,  and $k<\omega$.
	\begin{enumerate}
		\item We say that $A$ is \emph{$k$-generated} if there is a subset $G\subseteq A$ such that $|G|=k$ and $\langle G \rangle = A$.
		\item We say that $A$ is \emph{strongly not $k$-generated} if for every $G\subseteq A$ such that $|G|<k$ we have $|\langle G\rangle| <\omega$.
		\item We say that $A$ is \emph{strictly $k$-generated} if it is $k$-generated and strongly not $(k-1)$-generated.
		
		\item A \emph{variety} $\vari$ of $\cL$-algebras is a class of $\cL$-algebras which is axiomatised by equations, i.e. by formulas of the form $\forall x_0,\dots, \forall x_{n-1}( t(x_0,\dots,x_{n-1})=t'(x_0,\dots,x_{n-1}))$, where $t,t'$ are $\cL$-terms. Given a class of $\cL$-algebras $\cC$, we write $\theory(\cC)$ for the equational theory of $\cC$ and $\mathsf{Var}(\cC)$ for the class of algebras satisfying $\theory(\cC)$.
		
		\item A variety $\vari$ is \emph{locally finite} if every finitely generated algebra $A\in \vari$ is finite.
		
		\item A variety $\vari$ is \emph{$k$-finite} if every $k$-generated $A\in\vari$ is finite.
		
		\item A variety $\vari$ is \emph{regularly $k$-finite} if it is $k$-finite and, moreover, there are only finitely many $k$-generated algebras in $\vari$ up to isomorphism. [This definition is a finitary version of Definition 2.1(b) from \cite{bezhanishvili2001locally}]
	\end{enumerate}
\end{definition}

\noindent We recall two key results on varieties of algebras.  We write $\mathbb{H}$, $\mathbb{S}$ and $\mathbb{P}$ for the closure operations of taking homomorphic images, subalgebras and products respectively. We refer the reader to  \cite[Thm. 9.5]{Burris.1981} and \cite[Thm. 11.9]{Burris.1981} for their proofs. 
\begin{theorem} Let $\cL$ be a first-order signature with no relation symbol.
	\begin{enumerate}\label{birkhoff-tarski}
		\item \underline{Birkhoff}: A class of $\cL$-algebras is a variety if and only if it is closed under taking homomorphic images, subalgebras and products.
		\item \underline{Tarski}: For any class $\cC$ of $\cL$-algebras, $\mathsf{Var}(\cC)=\mathbb{HSP}(\cC)$.
	\end{enumerate}
\end{theorem}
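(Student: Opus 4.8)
The plan is to prove Birkhoff's theorem first and then deduce Tarski's theorem from it. For Birkhoff, the forward implication is routine: if a class equals $\mathsf{Var}(\calK)$, it is axiomatised by the equations $\theory(\calK)$, and a single equation $s=t$ is preserved under $\mathbb{S}$ (it restricts to substructures), under $\mathbb{P}$ (term operations act componentwise), and under $\mathbb{H}$ (homomorphisms commute with term operations); hence any variety is closed under $\mathbb{H}$, $\mathbb{S}$ and $\mathbb{P}$.

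For the converse, suppose $\calK$ is closed under $\mathbb{H}$, $\mathbb{S}$ and $\mathbb{P}$; I would show $\calK=\mathsf{Var}(\calK)$, so that $\calK$ is axiomatised by $\theory(\calK)$ and is therefore a variety. The inclusion $\calK\subseteq\mathsf{Var}(\calK)$ is immediate, and the whole difficulty is the reverse one. The key device is the free algebra. For a set $X$, let $T(X)$ be the absolutely free $\cL$-algebra (the term algebra) on $X$, and let $\theta$ be the relation on $T(X)$ with $s\mathbin{\theta}t$ iff $\calK\models s=t$. This $\theta$ is a congruence and $F(X):=T(X)/\theta$ is the free $\mathsf{Var}(\calK)$-algebra on $X$.

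The central step is to verify $F(X)\in\calK$. Here one observes that $\theta=\bigcap\ker h$, where $h$ ranges over all homomorphisms $T(X)\to A$ with $A\in\calK$: indeed $s\mathbin{\theta}t$ means $s=t$ holds in every $A\in\calK$ under every assignment $X\to A$, which is exactly $h(s)=h(t)$ for every such $h$. To avoid a proper class of targets, I would let $A$ range only over a set of representatives of the at most $|X|$-generated members of $\calK$; since each such algebra is a quotient of $T(X)$ its cardinality is bounded, so a set suffices and the intersection is unchanged. The diagonal map $T(X)\to\prod_{(A,h)}A$ then has kernel exactly $\theta$, so $F(X)$ embeds into a product of members of $\calK$; by closure under $\mathbb{P}$ and $\mathbb{S}$ we conclude $F(X)\in\calK$. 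To finish the converse, take any $A\in\mathsf{Var}(\calK)$ and the evaluation homomorphism $T(A)\to A$ extending $\mathrm{id}_A$, which is onto because $A$ generates itself. Since $A$ satisfies every equation of $\theory(\calK)$, this map is constant on $\theta$-classes, hence factors through a surjection $F(A)\twoheadrightarrow A$; as $F(A)\in\calK$ and $\calK$ is $\mathbb{H}$-closed, $A\in\calK$.

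Granting Birkhoff's theorem, Tarski's theorem reduces to showing that $\mathbb{HSP}(\cC)$ is the least variety containing $\cC$. I would first check the elementary commutation laws $\mathbb{SH}\subseteq\mathbb{HS}$, $\mathbb{PH}\subseteq\mathbb{HP}$ and $\mathbb{PS}\subseteq\mathbb{SP}$ by chasing homomorphisms and projections; these yield that $\mathbb{HSP}$ is idempotent, so $\mathbb{HSP}(\cC)$ is closed under $\mathbb{H}$, $\mathbb{S}$ and $\mathbb{P}$ and is a variety by Birkhoff. Since it contains $\cC$, and any variety containing $\cC$ is $\mathbb{H}$-, $\mathbb{S}$- and $\mathbb{P}$-closed and hence contains $\mathbb{HSP}(\cC)$, it is the smallest variety over $\cC$, i.e. $\mathsf{Var}(\cC)=\mathbb{HSP}(\cC)$. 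The main obstacle throughout is the free-algebra step: making the construction legitimate despite the proper-class size of $\calK$ and identifying $\theta$ with $\bigcap\ker h$, so that $F(X)$ is genuinely a subalgebra of a product drawn from $\calK$.
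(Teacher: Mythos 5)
Your proof is correct, and it coincides with the standard argument the paper relies on: the paper does not prove this theorem itself but cites it from Burris--Sankappanavar (Thms.~9.5 and 11.9), where exactly your route is taken --- the free algebra $T(X)/\theta$ embedded into a product of at most $|X|$-generated members of $\mathcal{K}$ for Birkhoff, and the commutation laws $\mathbb{SH}\subseteq\mathbb{HS}$, $\mathbb{PH}\subseteq\mathbb{HP}$, $\mathbb{PS}\subseteq\mathbb{SP}$ to get Tarski. Your handling of the set-versus-class issue via representatives of bounded cardinality is precisely the standard fix, so there is nothing to add.
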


\begin{proposition}\label{regular k-finite}
	Let $\cL$ be a finite first-order signature with no relation symbol, then a variety $\vari$ of $\cL$-algebras is $k$-finite if and only if it is regularly $k$-finite.
\end{proposition}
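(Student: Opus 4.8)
The plan is to prove the two implications separately, the forward one being immediate. If $\vari$ is regularly $k$-finite then by definition it is $k$-finite, so only the converse requires work. For the converse I assume $\vari$ is $k$-finite and must produce a finite bound on the number of $k$-generated algebras in $\vari$ up to isomorphism.

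The key object is the free $k$-generated algebra $F_\vari(k)$ in $\vari$, which exists because $\vari$ is a variety (equivalently, is $\mathbb{HSP}$-closed, cf. \cref{birkhoff-tarski}). Since $F_\vari(k)$ is $k$-generated and lies in $\vari$, the hypothesis that $\vari$ is $k$-finite forces $F_\vari(k)$ to be finite; write $N := |F_\vari(k)|$. The next step is to observe that every $k$-generated algebra $A\in\vari$ is a homomorphic image of $F_\vari(k)$: fixing a generating set $a_0,\dots,a_{k-1}$ of $A$ and using the universal property of the free algebra, the assignment of the free generators to the $a_i$ extends to a homomorphism $F_\vari(k)\to A$, which is surjective precisely because the $a_i$ generate $A$. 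Consequently $|A|\leq N$ for every $k$-generated $A\in\vari$.

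Finally I would count. Because $\cL$ is a finite signature with no relation symbols, for each fixed cardinality $m\leq N$ there are only finitely many $\cL$-algebras with an $m$-element underlying set up to isomorphism: there are finitely many ways to interpret each of the finitely many function symbols as an operation on a fixed $m$-element set. Hence there are only finitely many $\cL$-algebras of size $\leq N$ up to isomorphism, and a fortiori only finitely many $k$-generated members of $\vari$ up to isomorphism, which is exactly the content of regular $k$-finiteness. (Alternatively, one may bound the number of quotients of $F_\vari(k)$ by its finitely many congruences, an argument that does not even appeal to the finiteness of the signature.)

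The only point demanding care is the existence and basic properties of $F_\vari(k)$, together with the facts that it genuinely belongs to $\vari$, is $k$-generated, and enjoys the universal property yielding the surjections above; everything else is bookkeeping. I therefore do not expect a serious obstacle, the argument being essentially a standard free-algebra-plus-counting routine once these properties are invoked.
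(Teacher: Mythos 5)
Your proof is correct, but it takes a genuinely different route from the paper. The paper argues by contradiction via first-order compactness: it encodes ``$x_0,\dots,x_{k-1}$ generate an algebra of size $<m$'' by a formula $\psi_m$ (this is where the finiteness of $\cL$ is used a second time, to make the conjunctions over the operation symbols finite), observes that infinitely many $k$-generated algebras must have unbounded size because a finite signature admits only finitely many algebras of each fixed size, and then applies compactness to $\theory(\vari)\cup\{\neg\psi_l \mid l<\omega\}$ to produce an algebra in $\vari$ with an infinite $k$-generated subalgebra, contradicting $k$-finiteness via Birkhoff's theorem. Your argument replaces all of this with the free algebra $F_\vari(k)$: since it is $k$-generated and lies in $\vari$, $k$-finiteness makes it finite, every $k$-generated member of $\vari$ is one of its homomorphic images, and the counting is then immediate. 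This is more direct (no compactness, no nonconstructive model), yields an explicit size bound $N=|F_\vari(k)|$, and --- as your parenthetical congruence-counting variant shows --- proves the proposition without assuming $\cL$ finite at all, since a finite $F_\vari(k)$ has only finitely many congruences and hence finitely many quotients up to isomorphism; the finiteness hypothesis is genuinely needed only by the paper's method. The free-algebra facts you flag as the one point of care (existence of $F_\vari(k)$, membership in $\vari$, the universal property) are standard for varieties, e.g.\ via the term algebra modulo the congruence $\{(s,t)\mid \vari\models s=t\}$, which lies in $\mathbb{SP}(\vari)\subseteq\vari$; the only degenerate case is $k=0$ with a constant-free signature, where there are no $0$-generated algebras and the claim is vacuous.
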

\begin{proof}
	The right-to-left direction is obvious, so we prove only the left-to-right. Suppose $\vari$ is $k$-finite but contains infinitely many $k$-generated algebras. We let $I\subseteq \omega$ the set of numbers $m<\omega$ such that  $\vari$ contains a $k$-generated algebra $A_m$ of size $m$ and we let  $a^m_0,\dots,a^m_{k-1}$ be its generators. Since the language $\cL$ is finite, there are only finitely many algebras of any fixed size $m<\omega$, showing that $I$ is cofinal in $\omega$. Let $\mathrm{ar}(f)$ be the arity of an operation $f\in \cL$ and recall that constants are 0-ary operations.  Let $\cL_i$ be the set of operations  arity $i<\omega$ and notice that the set $\cL^+:=\bigcup_{i>0}\cL_i$ is finite.   For any $m\in I$ there is a formula $\psi_m$ which expresses the fact that $x_0,\dots,x_{k-1}$ generate an algebra of size $< m$, which is defined as
	
{\footnotesize 	\begin{align*}
\psi_m:= \exists (y_j)_{j<m-1} \;   \Big (  &\big ( \bigwedge_{j<k} x_j=y_j  \big )\; \land \;\big ( \bigwedge_{c\in \cL_0}\;\bigvee_{j<m-1}c=y_j \big )\;\land\; \\ &\big ( \bigwedge_{f\in \cL^+} \; \bigwedge_{j_1, \dots,j_{\mathrm{ar}(f)}< m-1 }  \; \bigvee_{j<m-1} f(y_{j_1},\dots,y_{j_{\mathrm{ar}(f)}})= y_{j} \big )   \Big ).
\end{align*}}

	\noindent By the choice of $A_m$, we have in particular that $A_m\models  \neg \psi_l(a^m_0,\dots,a^m_{k-1})$ for all $l \leq m$. Since $I$ is cofinal in $\omega$, it follows that the set of formulas $\theory(\vari)\cup \{ \neg \psi_l \mid l< \omega \}$ is consistent. By compactness there are an algebra $A\in \vari$  and elements $a_0,\dots,a_{k-1}\in A$  such that  $A\models \neg \psi_l(a_0,\dots,a_{k-1})$ for all $l<\omega$, showing that $a_0,\dots,a_{k-1}$ generate an infinite subalgebra of $A$. By Birkhoff's Theorem we conclude that $\langle a_0,\dots,a_{k-1}\rangle \in \vari$, showing that $\vari$ is not $k$-finite.
\end{proof}

\noindent The next result can be seen as a finitary version of \cite[Thm. 3.2]{bezhanishvili2001locally}

\begin{lemma}\label{key.lemma-heyting}
	Suppose $\cL$ is a first-order signature with no relation symbol and let $A$ be an $\cL$-algebra such that
	\begin{enumerate}
		\item $A$	does not contain any infinite $k$-generated subalgebra;
		\item $A$ contains only finitely many $k$-generated subalgebras up to isomorphism.
	\end{enumerate}  
	Then $\mathsf{Var}(A)$ is (regularly) $k$-finite.
\end{lemma}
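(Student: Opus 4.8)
We need to prove Lemma `key.lemma-heyting`: if $A$ is an $\mathcal{L}$-algebra (where $\mathcal{L}$ has no relation symbols) such that (1) $A$ has no infinite $k$-generated subalgebra, and (2) $A$ has only finitely many $k$-generated subalgebras up to isomorphism, then $\mathsf{Var}(A)$ is ($k$-finite, equivalently) regularly $k$-finite.

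Let me think about what $\mathsf{Var}(A) = \mathbb{HSP}(A)$ consists of, and what it means for a $k$-generated algebra in this variety.

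**Key strategy: understanding $k$-generated algebras in $\mathbb{HSP}(A)$**

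The free $k$-generated algebra in $\mathsf{Var}(A)$ is $F_{\mathsf{Var}(A)}(k)$, and every $k$-generated algebra in the variety is a homomorphic image of it. So showing $\mathsf{Var}(A)$ is $k$-finite amounts to showing $F_{\mathsf{Var}(A)}(k)$ is finite.

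Now, $F_{\mathsf{Var}(A)}(k)$ embeds into a product of copies of $A$ (this is standard: the free algebra in $\mathbb{HSP}(A)$ on $k$ generators embeds into $A^{A^k}$, the power indexed by all $k$-tuples of $A$, via the evaluation map). More precisely, $F_{\mathsf{Var}(A)}(k) \cong \langle G \rangle$ where $G = (g_0, \dots, g_{k-1})$ with each $g_j \in A^{A^k}$ given by the $j$-th projection $g_j(\vec{a}) = a_j$.

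**The crucial insight about subalgebras of products**

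Here's where the hypotheses bite. Consider the $k$-generated subalgebra $B = \langle g_0, \dots, g_{k-1}\rangle \subseteq A^{A^k}$. For each tuple $\vec{a} = (a_0,\dots,a_{k-1}) \in A^k$, the projection $\pi_{\vec{a}}: A^{A^k} \to A$ restricts to a homomorphism $B \to A$ whose image is exactly $\langle a_0, \dots, a_{k-1} \rangle$, the $k$-generated subalgebra of $A$ generated by $\vec{a}$. So $B$ embeds into the product $\prod_{\vec{a} \in A^k} \langle \vec{a} \rangle$, where each factor is a $k$-generated subalgebra of $A$.

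By hypothesis (1), each factor $\langle \vec{a} \rangle$ is **finite**. By hypothesis (2), there are only **finitely many isomorphism types** among these factors, say $C_1, \dots, C_r$. The diagonal map $B \to \prod_{\vec{a}} \langle \vec{a} \rangle$ is injective (two terms equal on every evaluation means they're equal in the free algebra). Now the point is that the image of $B$ is determined by, for each $k$-tuple of generators and each factor, the homomorphism to that factor — but two factors of the same isomorphism type, receiving the generators in a way that induces the same map, contribute redundantly.

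**The counting argument**

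Here is the heart of the matter, and the step I expect to be the main obstacle: I need to bound $|B|$. The map $B \to \prod_{\vec{a}} \langle \vec{a}\rangle$ is determined by the collection of composite maps $B \to \langle\vec{a}\rangle$. Each such composite is a homomorphism from the $k$-generated algebra $B$ sending the generators $\vec{g}$ to the generators $\vec{a}$ of $\langle\vec{a}\rangle$.

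The key reduction: an element $b \in B$ is determined by its image in $\prod_{\vec a}\langle \vec a\rangle$. But $b$'s image in factor $\vec a$ depends only on the pair $(\langle\vec a\rangle, \text{choice of generating tuple})$ up to isomorphism. Since each $\langle \vec a\rangle$ is finite and there are finitely many isomorphism types $C_1,\dots,C_r$, each $C_s$ has only finitely many $k$-tuples of elements, hence only finitely many *distinct* "generator-labelled" finite algebras $(C_s, \vec c)$ arise — call this finite set of labelled algebras $\mathcal{P}$, with $|\mathcal P| = N < \omega$. Every factor $\langle\vec a\rangle$ (with its generators $\vec a$) is isomorphic, as a generator-labelled algebra, to one of these $N$ members. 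Collapsing duplicate factors, the injection $B \hookrightarrow \prod_{\vec a}\langle\vec a\rangle$ factors through the finite product $\prod_{j=1}^{N}(C_{s_j}, \vec c_j)$, which has at most $\prod_{j=1}^N |C_{s_j}| \le (\max_s|C_s|)^N$ elements. Hence $B$ is finite.

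**Conclusion**

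Since $B \cong F_{\mathsf{Var}(A)}(k)$ is finite, every $k$-generated algebra in $\mathsf{Var}(A)$, being a homomorphic image of $B$, is finite — so $\mathsf{Var}(A)$ is $k$-finite, and by Proposition `regular k-finite` (given the signature is finite) it is regularly $k$-finite.

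I'd present this as: (a) $F_{\mathsf{Var}(A)}(k)$ embeds into a product of $k$-generated subalgebras of $A$; (b) these factors are finite (hyp. 1) with finitely many labelled iso-types (hyp. 2); (c) a counting argument collapses the product to a finite one. The main obstacle is making step (c) rigorous — justifying that two factors that are isomorphic *as generator-labelled algebras* can be identified, and that the embedding survives this collapse.
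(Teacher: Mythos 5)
Your proof is correct, and its combinatorial heart coincides with the paper's, but the overall architecture is genuinely different. The paper invokes Tarski's theorem $\mathsf{Var}(A)=\mathbb{HSP}(A)$ and checks that properties (i) and (ii) are each preserved by products, subalgebras and homomorphic images in turn; all of the real work sits in the products case, where a $k$-generated subalgebra $\langle G\rangle\subseteq A^\kappa$ is embedded into a finite power $A^N$ by identifying coordinates $i,j$ whenever the generator-matching map $g_l(i)\mapsto g_l(j)$ is an isomorphism of $\langle G_i\rangle$ onto $\langle G_j\rangle$ --- which is exactly your collapse of factors along generator-labelled isomorphisms, with hypothesis (i) making each factor finite and hypothesis (ii) guaranteeing finitely many labelled types (at most $\sum_s |C_s|^k$). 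You instead run this collapse once and for all on the canonical copy of $F_{\mathsf{Var}(A)}(k)$ inside $A^{A^k}$ generated by the projections, and then deduce $k$-finiteness because every $k$-generated member of the variety is a quotient of the now-finite free algebra; this disposes of the $\mathbb{S}$ and $\mathbb{H}$ steps automatically via the universal property, at the cost of quoting Birkhoff's realization of the free algebra inside a power, and your flagged step (c) is indeed routine to make rigorous in just the way you sketch. One small repair: your closing appeal to \cref{regular k-finite} for regularity is both unnecessary and slightly off, since that proposition assumes the signature $\cL$ is finite whereas the present lemma does not; regularity instead falls directly out of your argument, because a finite free algebra has only finitely many congruences, hence only finitely many quotients up to isomorphism, so the variety has only finitely many $k$-generated members up to isomorphism.
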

\begin{proof}
	By \cref{birkhoff-tarski}(ii) the variety $\mathsf{Var}(A)$ is obtained by taking products, subalgebras and quotients of $A$, in this precise order. It thus suffices to verify that properties (i) and (ii) are preserved by these operations.
	
	\underline{Products}. By the argument above we can consider without loss of generality only products of copies of $A$. Let $\kappa$ be a cardinal, we need to verify that the product $A^\kappa$ satisfies the properties (i) and (ii). 
	
	Consider a subset $G=\{g_l \mid l<k  \}\subseteq A^\kappa$ and  denote by $G_i:=\{g_l(i) \mid l<k\}$ the $i$-\emph{th} projection of $G$. Since by assumption $A$ satisfies the conditions (i) and (ii), it follows that every subalgebra $\langle G_i\rangle \subseteq A$ is finite and that there are only finitely many algebras $\langle G_i\rangle $ up to isomorphism. 
	
	For each $j<i<\kappa$, we let $\pi_{ij}:\langle G_i \rangle \to \langle G_j \rangle $ be the map defined by letting $\pi_{ij}(g_l(i))=g_l(j)$. By the conditions (i) and (ii),  there is (possibly after renumbering) some $N<\omega$  large enough so that $|\{ g_l{\upharpoonright}N \mid l<k \}|=|G|$ and   for every $i<\kappa$ there exists $j<N$ such that the map $\pi_{ij}:\langle G_i\rangle\to \langle G_j\rangle $  is an isomorphism. We then define the map  $h:\langle G \rangle\to A^N$ by letting
	\[h(g_l)= (\pi_{ij}(g_l(i)))_{i<\kappa, \\ j<N}\] 
	for every $l<k$. By our choice of the maps $(\pi_{ij})_{i<\kappa, j<N}$, it follows that this is a well-defined embedding of $\langle G \rangle$ into $A^N$. 
	
	Now, since $A^N$ is a finite product, any set $G'\subseteq A^N$ generates an algebra of size at most $\prod_{i<N}|\langle G'_i\rangle|<\omega$. Therefore $A^N$ satisfies condition (i) and, since $\langle G'\rangle$ is determined uniquely by its projections $G'_i$, it also satisfies condition (ii). Since the map $h:\langle G \rangle\to A^N$ is an embedding we then obtain that  $ \langle G\rangle $ is finite and so that $A^\kappa$  satisfies both conditions (i) and (ii).
	
	\underline{Subalgebras}. This is obvious as properties (i) and (ii) are clearly preserved under taking substructures.
	
	\underline{Homomorphic Images}. Let $f:B\twoheadrightarrow B'$ be a surjective homomorphism. For any choice of generators $f(g_0),\dots,f(g_{k-1})\in B'$ we have that $|\langle \{f(g_i )  \mid i<k \} \rangle | \leq f[ \langle \{g_i \mid i<k  \}\rangle ]$, which proves (i).  Moreover, since every $k$-generated subalgebra of $B$ has only finitely many homomorphic images up to isomorphism, it also follows that $B'$ satisfies (ii) as well.
\end{proof}

\begin{notation}
	We let $H_k:=\CU(\fE_k)$ be the dual Heyting algebra of $\fE_k$, namely the Heyting algebra of clopen upsets of $\fE_k$. We let $\vari_k:=\mathsf{Var}(H_k)$ be the variety of Heyting algebras generated by $H_k$, i.e. the class of all Heyting algebras which satisfy $\theory(H_k)$.
\end{notation}

\begin{proposition}\label{k-generated.Heyting}
	For every $k<\omega$, $H_k$ has the following properties:
	\begin{enumerate}
		\item $H_k$ is $(k+1)$-generated;
		\item $H_k$ does not contain any infinite $k$-generated subalgebra;
		\item $H_k$ contains only finitely many $k$-generated subalgebras up to isomorphism.	
	\end{enumerate}
	Thus in particular $H_k$ is a strictly $(k+1)$-generated Heyting algebra.
\end{proposition}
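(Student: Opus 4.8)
The plan is to read off all three clauses from the colourability analysis of the previous section, translating between Heyting algebras and Esakia spaces via duality. The single technical device underlying everything is the identification, already implicit in the proof of \cref{duality-result}, between a finitely generated subalgebra $\langle G\rangle\subseteq H_k$ and the quotient of $\fE_k$ by $\eqrel^G_\omega$.

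For clause (i) I would argue directly. By \cref{k+1-colourability} the space $\fE_k$ is $(k+1)$-colourable, so there is a map $c:k+1\to\CU(\fE_k)$ such that $\fE_k$ is coloured by $G:=c[k+1]$. Then \cref{duality-result} yields $H_k=\langle G\rangle$. As $|G|\le k+1$ and $H_k$ is infinite, I enlarge $G$ by finitely many redundant elements of $H_k$ to obtain a generating set of size exactly $k+1$, which does not change $\langle G\rangle$; hence $H_k$ is $(k+1)$-generated.

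For clauses (ii) and (iii) the bridge is the following. Fix a finite $G\subseteq H_k$ and let $p:\fE_k\to\fE_{\langle G\rangle}$, $p(x)=x\cap\langle G\rangle$, be the surjective p-morphism dual to the inclusion $\langle G\rangle\hookrightarrow H_k$ constructed in the proof of \cref{duality-result}. By \cref{lemmaquotient}, $p(x)=p(y)$ holds exactly when $x$ and $y$ belong to the same clopen upsets of $\langle G\rangle$, that is, exactly when $x\eqrel^G_\omega y$; thus the points of $\fE_{\langle G\rangle}$ are precisely the $\eqrel^G_\omega$-classes, so $|\fE_{\langle G\rangle}|=|\fE_k/{\eqrel^G_\omega}|$, and a finite Esakia dual forces $\langle G\rangle$ to be finite. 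Now a $k$-generated subalgebra is $\langle c[k]\rangle$ for some $c:k\to\CU(\fE_k)$ (padding $c$ if the subalgebra is generated by fewer elements). By \cref{strong-non-colourability} the space $\fE_k$ is strongly not $k$-colourable, so $|\fE_k/{\eqrel^{c[k]}_\omega}|$ is finite; hence $\fE_{\langle c[k]\rangle}$ is a finite poset and $\langle c[k]\rangle$ is finite, which gives clause (ii).

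Clause (iii) is where the main obstacle lies, and it requires upgrading the finiteness just used to a bound $N(k)$ on $|\fE_k/{\eqrel^{c[k]}_\omega}|$ that is \emph{uniform} in $c$: once $|\fE_{\langle c[k]\rangle}|\le N(k)$ for every $c$, there are only finitely many finite posets of size at most $N(k)$, hence (the signature $\cL$ being finite) only finitely many Heyting algebras with dual of that size, and therefore only finitely many $k$-generated subalgebras of $H_k$ up to isomorphism. The difficulty is that the ``ending levels'' $i_0\le\dots\le i_{k-1}$ of the generating upsets may sit arbitrarily deep in $\fE_k$, so the number of levels carrying nontrivial colour data is unbounded and cannot by itself bound the number of types. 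The way around is to extract the quantitative content already present in \cref{long-lemma} and \cref{lemma11}: the former bounds the number of $\omega$-types realised at the lowest ending level by $2^k$, while between consecutive ending levels every generator is either empty or full, so each level carries a single $0$-type and the collapse argument of \cref{lemma11} shows that within $2^k+1$ levels the types either fail to multiply or fuse into one. Consequently new $\omega$-types can appear only in bounded windows around the at most $k$ ending levels, plus the single type of the collapsed tail and the type of $x_\infty$, yielding a bound $N(k)$ depending on $k$ alone. Finally, combining the pieces: clause (ii), together with the padding argument, shows that every subalgebra of $H_k$ generated by fewer than $k$ elements is finite, i.e. $H_k$ is strongly not $k$-generated, so with clause (i) the algebra $H_k$ is strictly $(k+1)$-generated.
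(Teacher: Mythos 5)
Your proof is correct and takes essentially the same route as the paper: clause (i) via \cref{k+1-colourability} and \cref{duality-result}, clause (ii) via \cref{strong-non-colourability} together with \cref{lemmaquotient}, and clause (iii) by extracting from \cref{long-lemma} and \cref{lemma11} a bound on the number of $\omega$-types that is uniform in the $k$-colouring, then counting isomorphism types of algebras of bounded size. Your detour through the dual p-morphism $p:\fE_k\to\fE_{\langle G\rangle}$ in (ii) and your explicit ``bounded windows around the ending levels'' justification of uniformity in (iii) are just slightly more detailed renderings of the paper's argument, not a different approach.
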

\begin{proof} $\;$
	\begin{enumerate}		
		\item $\fE_k$ is $(k+1)$-colourable by \cref{k+1-colourability} and so it follows by  \cref{duality-result} that $H_k$ is $(k+1)$-generated.  
		\item Any finite set of $k$-many generators $G=\{ g_i \mid i< k\}\subseteq H$ induces a colouring $c_G:k\to \CU(\fE_k)$ by letting $c_G(i)=g_i$ for all $i<k$. By \cref{strong-non-colourability} it follows that any such  colouring isolates only finitely many elements from $\fE_k$. In particular, there are only finitely many $\omega$-types under the colouring induced by $G$ and by \cref{lemmaquotient} only finitely many clopen upsets in $\langle G\rangle$, showing $\langle G \rangle$ is finite.
		
		\item Firstly, we notice that for any arbitrary $k$-colouring $c:k\to \CU(\fE_k)$, there exists by \cref{long-lemma,lemma11} a uniform upper bound to the number of $\omega$-types over $c[k]$ of elements of $\fE_k$. In fact, adopting the notation from \cref{long-lemma}, each restricted colouring $c[l+1]$ for $l<k$ isolates at most $2^{l+1}$-many elements at level $L^{i_{l}}_k$. Moreover, by \cref{lemma11}, each restricted colouring $c[l+1]$ can isolate only elements up to level $L^{i_l+2^k+1}_k$. Thus, independently of the choice of the colouring, there is a uniform bound to the number of elements of $\fE_k$ which can be isolated by a $k$-colouring.	
		
		Now, we have any $k$-generated subalgebra $H\subset H_k$ with a set of generators $G=\{g_i\mid i<k\}$ uniquely determines the colouring $c_G:k\to \CU(\fE_k)$ obtained by letting $c_G(i)=g_i$ for all $i<k$. \cref{lemmaquotient} then establishes a correspondence between the equivalence classes of $\eqrel^{c[k]}_{\omega}$ and the elements of $\langle G\rangle$. Therefore, we also obtain that there is an upper bound to the size of a $k$-generated subalgebra $H$ of $H_k$. Since there are only finitely many subalgebras of $H_k$ of bounded size, it follows that $H_k$ contains only finitely many $k$-generated subalgebras up to isomorphism.		
	\end{enumerate}
	This concludes our proof.	
\end{proof}

\begin{theorem}\label{main.theorem}
	For every $k<\omega$, the  variety of Heyting algebras $\mathcal{V}_k$   is (regularly) $k$-finite but not $(k+1)$-finite.
\end{theorem}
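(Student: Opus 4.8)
The plan is to prove Theorem \ref{main.theorem} by combining \cref{k-generated.Heyting} with \cref{key.lemma-heyting} and the earlier colourability results. The statement has two halves: that $\vari_k$ is (regularly) $k$-finite, and that $\vari_k$ is not $(k+1)$-finite.

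For the positive half, I would appeal directly to \cref{key.lemma-heyting}. That lemma says that if an algebra $A$ contains no infinite $k$-generated subalgebra and only finitely many $k$-generated subalgebras up to isomorphism, then $\mathsf{Var}(A)$ is (regularly) $k$-finite. But \cref{k-generated.Heyting}(ii) and (iii) establish precisely these two hypotheses for $A=H_k$, and by definition $\vari_k=\mathsf{Var}(H_k)$. Hence $\vari_k$ is (regularly) $k$-finite, and no further work is needed for this direction.

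For the negative half, the key observation is that $H_k$ itself is an infinite $(k+1)$-generated Heyting algebra lying in $\vari_k$. Indeed $H_k=\CU(\fE_k)$ is infinite because $\fE_k$ is an infinite Esakia space, and $H_k\in\mathsf{Var}(H_k)=\vari_k$ trivially. By \cref{k-generated.Heyting}(i), $H_k$ is $(k+1)$-generated. Thus $\vari_k$ contains an infinite $(k+1)$-generated algebra, which by definition means $\vari_k$ is not $(k+1)$-finite. This completes both directions.

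I expect no serious obstacle here: the theorem is essentially a bookkeeping corollary that packages the technical content already proved. The only point requiring a moment's care is verifying that $H_k$ is genuinely infinite — but this is immediate since $\fE_k$ has infinitely many points (one full width-$(2^k+1)$ level for each $i<\omega$) and distinct levels can be separated by clopen upsets, so $\CU(\fE_k)$ is infinite. One might also remark, to close the loop with the abstract, that $\vari_k$ therefore witnesses the separation between $k$-finiteness and $(k+1)$-finiteness, and that the case $k=2$ yields the promised negative answer to the question of Bezhanishvili and Grigolia.
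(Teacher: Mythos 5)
Your proof is correct and takes essentially the same route as the paper: the paper likewise obtains regular $k$-finiteness of $\vari_k=\mathsf{Var}(H_k)$ by feeding properties (ii) and (iii) of \cref{k-generated.Heyting} into \cref{key.lemma-heyting}, and refutes $(k+1)$-finiteness by noting that $H_k$ itself is an infinite $(k+1)$-generated member of $\vari_k$. Your explicit verification that $H_k$ is infinite is a minor point the paper leaves implicit in calling $H_k$ strictly $(k+1)$-generated, but it is a harmless addition rather than a divergence.
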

\begin{proof}
	By \cref{k-generated.Heyting} the algebra $H_k$ is an infinite, $(k+1)$-generated Heyting algebra which (i) does not contain any infinite $k$-generated subalgebra and (ii) contains only finitely many $k$-generated subalgebras up to isomorphism.  By \cref{key.lemma-heyting} properties (i) and (ii) are preserved by the variety operations, hence our statement follows.
\end{proof}

\noindent The following corollary follows immediately from the previous theorem and gives a negative answer to Problem 2.4 from \cite{bezhanishvili2005locally}.

\begin{corollary}
	The variety $\vari_2$ is (regularly) 2-finite but not locally finite.
\end{corollary}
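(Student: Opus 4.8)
The plan is to derive the corollary directly from \cref{main.theorem} by instantiating the parameter $k$ at the value $k=2$, and then to argue that $2$-finiteness together with the failure of $3$-finiteness witnesses non-local-finiteness. First I would invoke \cref{main.theorem} with $k=2$, which immediately yields that the variety $\vari_2=\mathsf{Var}(H_2)$ is (regularly) $2$-finite but not $3$-finite. The regularly $2$-finite part is exactly the first half of the claimed statement, so no further work is required there.

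For the second half I would observe that local finiteness is a strictly stronger property than $k$-finiteness for any fixed $k$: if a variety $\vari$ were locally finite, then in particular every $m$-generated algebra in $\vari$ would be finite for every $m<\omega$, and hence $\vari$ would be $m$-finite for all $m$. Contrapositively, since \cref{main.theorem} tells us that $\vari_2$ fails to be $3$-finite --- that is, $\vari_2$ contains some $3$-generated algebra which is infinite, namely (a homomorphic image of a product of copies of) $H_2$ itself in the relevant sense --- it follows at once that $\vari_2$ cannot be locally finite. Concretely, $H_2$ is by \cref{k-generated.Heyting} an infinite $(k+1)$-generated, i.e.\ $3$-generated, Heyting algebra lying in $\vari_2$, which directly exhibits a finitely generated infinite member and thereby refutes local finiteness.

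The only mild subtlety worth flagging is terminological rather than mathematical: one must check that the definitions line up so that ``not $3$-finite'' genuinely implies ``not locally finite.'' This is immediate from the definition, since a locally finite variety is $m$-finite for every $m$, and in particular $3$-finite; so the negation of $3$-finiteness forbids local finiteness. I do not anticipate any real obstacle here, as the corollary is purely a specialisation and unpacking of the general theorem, with the locally-finite failure handed to us by the explicit infinite $3$-generated algebra $H_2\in\vari_2$. The final step is simply to note that this answers \cite[Prob. 2.4]{bezhanishvili2005locally} negatively, since that problem asked whether finiteness of the free $2$-generated algebra (a consequence of $2$-finiteness) suffices for local finiteness, and $\vari_2$ is a counterexample.
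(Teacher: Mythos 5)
Your proof is correct and matches the paper's own treatment, which derives the corollary immediately from \cref{main.theorem} at $k=2$, with $H_2\in\vari_2$ serving as the explicit infinite $3$-generated witness against local finiteness. The only stray remark is your parenthetical about ``a homomorphic image of a product of copies of $H_2$'': this hedging is unnecessary, since $H_2$ itself lies in $\mathsf{Var}(H_2)$ directly.
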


 Finally, we conclude by showing that, for every $k<\omega$, the variety witnessing the difference between $k$-finiteness and $(k+1)$-finiteness can be chosen to be finitely axiomatisable.

\begin{corollary}
	For every $k<\omega$ there is a finitely axiomatisable variety of Heyting algebras $\vari$ which is (regularly) $k$-finite and contains an infinite, $(k+1)$-generated Heyting algebra.
\end{corollary}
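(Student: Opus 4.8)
The plan is to \emph{envelope} the variety $\vari_k$ by a larger, finitely axiomatisable variety $\vari$ that has the same free $k$-generated algebra, so that $\vari$ stays $k$-finite while still containing the infinite $(k+1)$-generated algebra $H_k$. The guiding principle is that a finite algebra, once its whole operation table is written out as equations, forces the free algebra of any variety satisfying those equations to collapse back onto it.

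Concretely, by \cref{main.theorem} the variety $\vari_k$ is $k$-finite, so the free $k$-generated algebra $F:=F_{\vari_k}(k)$ is itself $k$-generated and hence finite; write $M=|F|$ and let $[x_0],\dots,[x_{k-1}]$ be its free generators. Since $F$ is finite and generated by these, every element of $F$ is the value of some term, so I fix terms $t_1(\vec x),\dots,t_M(\vec x)$ whose interpretations enumerate $F$, choosing them so that $0$, $1$ and the variables $x_0,\dots,x_{k-1}$ themselves occur among the $t_i$. Reading off the operation table of $F$ yields a finite set of identities $\Sigma$: for each operation $\circ\in\{\land,\lor,\to\}$ and each pair $i,j\leq M$ an identity $t_i\circ t_j = t_{e(i,j,\circ)}$ recording the value computed in $F$, together with the identities naming $0$ and $1$. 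Because these identities hold in the free algebra $F$, they hold throughout $\vari_k$, i.e. $\Sigma\subseteq\theory(\vari_k)$.

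I then set $\vari:=\mathsf{Var}(\Sigma)$, the variety of Heyting algebras axiomatised by the finitely many Heyting axioms together with $\Sigma$; this is finitely axiomatisable by construction. Since $\Sigma\subseteq\theory(\vari_k)$ we have $\vari_k\subseteq\vari$, so $H_k\in\vari$, and by \cref{k-generated.Heyting} this is an infinite $(k+1)$-generated Heyting algebra lying in $\vari$. It remains to verify that $\vari$ is $k$-finite, for which it suffices to show $F_\vari(k)\cong F$. On the one hand, the inclusion $\vari_k\subseteq\vari$ yields a surjection $F_\vari(k)\twoheadrightarrow F_{\vari_k}(k)=F$, so $|F_\vari(k)|\geq M$. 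On the other hand, the set $S=\{[t_1],\dots,[t_M]\}\subseteq F_\vari(k)$ contains the generators and the constants and, since $\vari\models\Sigma$, is closed under $\land,\lor,\to$; hence $S$ is a subalgebra containing the generators, forcing $S=F_\vari(k)$ and $|F_\vari(k)|\leq M$. Thus $F_\vari(k)\cong F$ is finite, every $k$-generated algebra in $\vari$ is a quotient of it and so finite, and $\vari$ is $k$-finite, equivalently regularly $k$-finite by \cref{regular k-finite}.

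The only step requiring real care is the upper bound $|F_\vari(k)|\leq M$, that is, checking that $\Sigma$ genuinely witnesses $S$ as a subalgebra: I must be certain the chosen representatives $t_1,\dots,t_M$ include the generators and constants, and that every product $t_i\circ t_j$ is reduced back into the list by a corresponding identity of $\Sigma$, so that no term in $\vec x$ can escape $S$. Granting this, the two inequalities pin $F_\vari(k)$ to $F$, and together with $H_k\in\vari_k\subseteq\vari$ this delivers a finitely axiomatisable, (regularly) $k$-finite variety containing an infinite $(k+1)$-generated Heyting algebra, completing the proof.
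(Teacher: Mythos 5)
Your proof is correct, but it takes a genuinely different route from the paper. The paper argues by first-order compactness: since $\vari_k$ is \emph{regularly} $k$-finite (\cref{main.theorem}), there is a uniform bound $N$ on the size of $k$-generated algebras in $\vari_k$, this bound is expressed by a single sentence $\theta_k$ (a variant of the formula $\psi_m$ from \cref{regular k-finite}), and since $\theory(\vari_k)\models\theta_k$, compactness yields a finite $T\subseteq\theory(\vari_k)$ with $T\models\theta_k$; the variety axiomatised by $T$ then works. You instead exploit the finite free algebra $F=F_{\vari_k}(k)$ directly, axiomatising by its operation table: the identities $\Sigma$ read off from the table hold at the free generators of $F$, hence -- by the universal property of the free algebra, a step you state slightly too quickly (``hold in the free algebra'' should be ``hold under the canonical assignment to the free generators, hence, composing with the unique homomorphisms $F\to A$, under every assignment in every $A\in\vari_k$'') -- throughout $\vari_k$, and your two-sided argument pinning $F_\vari(k)\cong F$ is sound: the surjection $F_\vari(k)\twoheadrightarrow F$ gives the lower bound, and closure of $\{[t_1],\dots,[t_M]\}$ under the operations (forced by $\Sigma$) gives the upper bound. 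Your approach buys two things: it is more explicit (given the table of $F$, the axiomatisation $\Sigma$ is concrete, whereas the paper's finite $T$ is extracted non-constructively from compactness), and it yields the sharper conclusion that the enveloping variety has the \emph{same} free $k$-generated algebra as $\vari_k$, not merely a bound on sizes; note also that you only need $k$-finiteness of $\vari_k$, since the finiteness of $F$ alone bounds all $k$-generated members as its quotients, whereas the paper routes through regular $k$-finiteness to obtain the bound $N$ appearing in $\theta_k$. The paper's argument is shorter given that the machinery of $\psi_m$ and compactness was already set up in \cref{regular k-finite}, and it avoids invoking the existence and genericity of free algebras.
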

\begin{proof}
	By \cref{main.theorem} the variety $\vari_k$ contains only finitely many $k$-generated algebras up to isomorphism. Therefore there is a number $N<\omega$ such that  every $k$-generated algebra in $\vari_k$ has size bounded by $N$. We notice that this property is expressed by the following formula, which adapts the formula $\psi_k$ from \cref{regular k-finite} to the present context:	
	
{\footnotesize 		\begin{align*}
	\theta_k= \forall (x_i)_{i<k} \exists (y_j)_{j<N} \; &  \Big (  \big ( \bigwedge_{i<k} x_i=y_i  \big ) \land \big (\bigvee_{j<N} y_j=0 \big )\land  \big ( \bigwedge_{\odot\in \{\land,\lor,\to \}} \bigwedge_{i,i'<N} \bigvee_{j<N} y_i\odot y_{i'}= y_{j} \big )   \Big ).
	\end{align*}}
	
	\noindent In particular we have that $\theory(\vari_k)\models \theta_k$. By compactness there is a finite set of equations $T\subseteq \theory(\vari_k)$ such that $T\models \theta_k$. Let $\vari$ be the set of all Heyting algebras satisfying $T$. Since $T\subseteq \theory(\vari_k)$ it follows immediately that $H_k\in \vari$. Moreover, since $T\models \theta_k$, we have that every $k$-generated algebra in $\vari$ is of size bounded by $N$. It follows that $\vari$ is a finitely axiomatisable variety which is regularly $k$-finite and contains an infinite, $(k+1)$-generated Heyting algebra.
\end{proof}
\newpage
\printbibliography

\end{document}